\numberwithin{equation}{section}
\newtheorem{theorem}{Theorem}[section]
\newtheorem{lemma}[theorem]{Lemma}
\newtheorem{thm}[theorem]{Theorem}
\newtheorem{cor}[theorem]{Corollary}
\newtheorem{rmk}[theorem]{Remark}
\newcommand{\Rmnum}[1]{\expandafter\@slowromancap\romannumeral #1@}
\begin{document}
\title{Approximate Two-Sphere One-Cylinder Inequality in Parabolic Periodic Homogenization}
\author{Yiping Zhang\footnote{Email:zhangyiping161@mails.ucas.ac.cn}\\Academy of Mathematics and Systems Science, CAS;\\
University of Chinese Academy of Sciences;\\
Beijing 100190, P.R. China.}
\date{}
\maketitle
\begin{abstract}
In this paper, for a family of second-order parabolic equations with rapidly oscillating and time-dependent periodic coefficients, we are interested in an approximate two-sphere one-cylinder inequality for these solutions in parabolic periodic homogenization, which implies an approximate quantitative propagation of smallness. The proof relies on the asymptotic behavior of fundamental solutions and the Lagrange interpolation technique.
\end{abstract}

\section{Introduction}
Quantitative propagation of smallness is one of the  central issues in the quantitative study of solutions of elliptic and parabolic equations. It can be
stated as follows: a solution $u$ of a PDE $Lu=0$ on a domain $X$ can be made arbitrarily small on any given compact subset of $X$ by making it sufficiently small on an arbitrary given subdomain $Y$. There are many important applications in quantitative propagation of smallness, such as the stability estimates for the Cauchy problem \cite{alessandrini2009stability} and the Hausdorff measure estimates of nodal sets of eigenfunctions \cite{lin1991nodal}, \cite{logunov2018nodal}.

For solutions of second order parabolic equations
\begin{equation}
\left(\partial_t-\mathcal{L}\right)u=\partial_tu-\partial_i\left(a_{ij}(x,t)\partial_j u\right)+b_i\partial_i u+cu=0,
\end{equation}(the summation convention is used throughout the paper). There exists a large literature on the three cylinder
inequality for solutions to parabolic equations. If $A(x,t)=\left(a_{ij}(x,t)\right)$ is three times continuously differentiable with respect to $x$ and one time continuously differentiable with respect to $t$, and $b$ and $c$ are bounded, then a not optimal three-cylinder inequality has been obtained in \cite{glagoleva1967some} and \cite{varin1992three-cylinder}. In \cite{glagoleva1967some}, the three-cylinder inequality is derived by the Carleman estimates proved in \cite{Li1963An}. In 2003, Vessella\cite{vessella2003carleman} has obtained the following optimal three-cylinder inequality:

\begin{equation}\left\|u
\right\|_{L^{2}(Q_R^{T/2})} \leqslant C\left( ||u||_{L^2(Q_\rho^{T})}\right)^{\kappa_\rho}\left( ||u||_{L^2(Q_{R_0}^{T})}\right)^{\kappa_\rho},\end{equation}
for every $0<\rho<R<R_0$, under the assumptions that the derivatives $\partial A/\partial t$, $\partial A/\partial x^i$, $\partial^2 A/(\partial t\partial
x^j)$, $\partial^2 A/(\partial x^i\partial x^j)$, for every $i,j\in \{1,\cdots,d\}$, $b=(b_1,\cdots,b_d)$ and $c$ are bounded, where $Q_r^t=B_r\times (-t,t)$
with $B_r$ the $d$-dimensional open ball centered at $0$ of radius $r$ and $\kappa_\rho\sim |\log \rho|^{-1}$ as $\rho\rightarrow 0$. The optimality of Equation $(1.2)$ consists in the growth rate of the exponent $\kappa_\rho$. Later on, Escauriaza and Vessella \cite{escauriaza2003optimal} have obtained the inequality $(1.2)$ under the assumptions that $A\in C^{1,1}(\mathbb{R}^{n+1})$ , $b$ and $c$ are bounded. Moreover, Vessella \cite{vessella2008quantitative} has obtained the following two-sphere one-cylinder inequality:
\begin{equation}\left\|u\left(., t_{0}\right)\right\|_{L^{2}\left(B_{\rho}\left(x_{0}\right)\right)} \leqslant C\left\|u\left(., t_{0}\right)\right\|_{L^{2}\left(B_{r}\left(x_{0}\right)\right)}^{\theta}\|u\|_{L^{2}\left(B_{R}\left(x_{0}\right) \times\left(t_{0}-R^{2}, t_{0}\right)\right)}^{1-\theta},\end{equation}under the assumptions that $A$ satisfies the Lipschitz continuity: $|A(y,s)-A(x,t)|\leq C\left(|x-y|+|t-s|^{1/2}\right)$, $b=0$ and $c=0$, where $\theta=\left(C\log {\frac{R}{Cr}}\right)^{-1}$, $0<r<\rho<R$ and $C$ depends neither on $u$ nor on $r$ but may depend on $\rho$ and $R$, see also \cite{escauriaza2006doubling}, where the two-sphere one-cylinder inequality $(1.3)$ for time-dependent parabolic operators was first established. And the estimate $(1.3)$ has first been obtained by Landis and
Oleinik \cite{landis1974generalized}, when $A$ does not depend on $t$.

In general, the Carleman estimates are tools often used to obtain a three-cylinder inequality and the unique continuation properties for solutions. The Carleman estimates are weighted integral inequalities with suitable weight functions satisfying some convexity properties. The three-cylinder inequality is obtained by applying the Carleman estimates by choosing a suitable function. For Carleman estimates and the unique continuation properties for the parabolic operators, we refer readers to  \cite{escauriaza2000carleman,escauriaza2004unique,escauriaza2001carleman,fernandez2003unique,vessella2008quantitative,koch2009carleman} and their references therein for more results.

Recently, Guadie and Malinnikova \cite{guadie2014on}
developed the three-ball inequality with the help of Poisson kernel for harmonic functions. This method also has been used in \cite{kenig2019propagation} to obtain an approximate three-ball inequality in elliptic periodic homogenization. Moreover, it is an interesting problem to extend the Carleman estimates to the homogenization equations and left for the future.

In this paper, we intend to develop an approximate two-sphere one-cylinder inequality for the $L^\infty$-norm in parabolic periodic homogenization equation, parallel to the inequality $(1.3)$, with the different exponent $\theta$.

We consider a family of second-order parabolic equations in divergence form with rapidly oscillating and time-dependent periodic coefficients,
\begin{equation}
\partial_t u_\varepsilon-\operatorname{div}\left(A(x/\varepsilon,t/\varepsilon^2)\nabla u_\varepsilon\right)=0,
\end{equation}
where $1>\varepsilon>0$ and $A(y,s)=(a_{ij}(y,s))$ is a symmetric $d\times d$ matrix-valued function in $\mathbb{R}^d\times \mathbb{R}$ for $d\geq 2$. Assume that $A(y,s)$ satisfies the following assumptions:

(i) Ellipticity: For some $0<\mu<1$ and all $(y,s)\in \mathbb{R}^d\times \mathbb{R}$, $\xi\in \mathbb{R}^d$, it holds that
\begin{equation}
\mu |\xi|^2\leq A(y,s)\xi\cdot\xi\leq \mu^{-1}|\xi|^2.
\end{equation}

(ii) 1-Periodicity:
\begin{equation}A(y+z,s+t)=A(y,s) \quad \text{for } (y,s)\in \mathbb{R}^d\times \mathbb{R} \text{ and }(z,t)\in\mathbb{Z}^d\times\mathbb{Z}.\end{equation}

(iii)H\"{o}lder continuity: There exist constants $\tau>0$ and $0<\lambda<1$ such that
\begin{equation}|A(x,t)-A(y,s)|\leq \tau \left(|x-y|+|t-s|^{1/2}\right)^\lambda\end{equation} for any $(x,t),(y,s)\in \mathbb{R}^d\times \mathbb{R}$.

We are able to establish the following approximate two-sphere one-cylinder inequality in ellipsoids. The definition of ellipsoids $E_r$ depending on the coefficients $A(y,s)$ is given in Section 2.
\begin{thm}(Interior two-sphere one-cylinder inequality) Let $u_\varepsilon$ be a solution of $(1.4)$ in $B_{{R}}\times (-T,T)$. For $0<r_1<r_2<{r_3}/12<R/8$, then there holds
\begin{equation}
\sup_{E_{r_2}}|u_\varepsilon(\cdot,t_0)|\leq C\left\{ (\sup_{E_{r_1}}|u_\varepsilon(\cdot,t_0)|)^{\alpha}(\sup_{\tilde{\Omega}_{{r_3},t_0}}|u_\varepsilon|)^{1-\alpha}+\frac{r_3}{r_{1}}\left[\frac{\varepsilon}{{r_3}}\log (2+\frac{r_3}{{\varepsilon}})\right]^{\alpha}\sup_{\tilde{\Omega}_{{r_3},t_0}}|u_\varepsilon|\right\},
\end{equation} where $\alpha=\frac{\log \frac{{r_3}}{4r_2}}{\log \frac{{r_3}}{2r_1}}$, and $C$ depends only on $d$, $\mu$ and $(\tau,\lambda)$, and $\tilde{\Omega}_{{r_3},t_0}=E_{r_3} \times (t_0-{r_3}^2,t_0)$ is a subdomain of $B_{R}\times (-T,T)$ with $R$ and $T$ fixed.
\end{thm}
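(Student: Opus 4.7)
The plan is to reduce the $\varepsilon$-problem to its homogenized constant-coefficient counterpart via the asymptotic behaviour of the fundamental solution, and then to apply a polynomial (Lagrange-type) two-sphere inequality on the ellipsoids that naturally arise from the homogenized operator.

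\emph{Step 1: Homogenization of the fundamental solution.} Let $\Gamma_\varepsilon$ be the fundamental solution of $\partial_t-\operatorname{div}(A(x/\varepsilon,t/\varepsilon^2)\nabla)$ and $\Gamma_0$ that of $\partial_t-\operatorname{div}(\hat A\nabla)$, where $\hat A$ is the effective matrix. I would invoke (or prove) a pointwise comparison of the form
\begin{equation*}
|\Gamma_\varepsilon(x,t;y,s)-\Gamma_0(x,t;y,s)|\leq \frac{C\varepsilon\,\log\!\bigl(2+\varepsilon^{-1}(|x-y|+|t-s|^{1/2})\bigr)}{|x-y|+|t-s|^{1/2}}\,\Gamma_0(x,t;y,s),
\end{equation*}
which is by now standard in parabolic periodic homogenization and is exactly the quantitative ingredient promised in the abstract. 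The level sets of $\Gamma_0(\,\cdot\,,t_0;0,t_0-r^2)$ are the ellipsoids $E_r$ of Section~2, which explains the ellipsoidal domains in the statement.

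\emph{Step 2: From $u_\varepsilon$ to a caloric polynomial.} Expressing $u_\varepsilon(x,t_0)$ by Green's representation on the cylinder $\tilde\Omega_{r_3,t_0}$ and substituting $\Gamma_\varepsilon=\Gamma_0+(\Gamma_\varepsilon-\Gamma_0)$ yields a caloric function $v(x,t_0)$ (with respect to $\hat A$) together with an additive error bounded by $C\frac{\varepsilon}{r_3}\log(2+r_3/\varepsilon)\,\sup_{\tilde\Omega_{r_3,t_0}}|u_\varepsilon|$. Since $v(\cdot,t_0)$ is real-analytic in $x$, I would truncate its Taylor expansion at degree $N$, balanced against the homogenization error; the tail decays geometrically by Cauchy-type estimates on the analytic representation $v(x,t_0)=\int \Gamma_0\,u_\varepsilon$.

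\emph{Step 3: Lagrange interpolation on three nested ellipsoids.} The extremal/Lagrange interpolation argument in the ellipsoidal setting gives, for any polynomial $P$ of degree at most $N$,
\begin{equation*}
\sup_{E_{r_2}}|P|\leq C\bigl(\sup_{E_{r_1}}|P|\bigr)^{\alpha}\bigl(\sup_{E_{r_3}}|P|\bigr)^{1-\alpha},\qquad \alpha=\frac{\log(r_3/4r_2)}{\log(r_3/2r_1)},
\end{equation*}
which is precisely the exponent of the theorem. Applying this to the truncated polynomial $P_N$, then unwinding $u_\varepsilon\to v\to P_N$ with the triangle inequality and the elementary bound $(a+b)^\alpha\leq a^\alpha+b^\alpha$ produces the claimed estimate.

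\emph{Main obstacle.} The delicate point is producing exactly the error prefactor $\frac{r_3}{r_1}\bigl[\frac{\varepsilon}{r_3}\log(2+r_3/\varepsilon)\bigr]^\alpha$. The exponent $\alpha$ on the error arises because the homogenization error enters at the innermost scale $E_{r_1}$ and is then raised to the power $\alpha$ by the polynomial inequality. The factor $r_3/r_1$ reflects the Markov--Bernstein-type growth of $P_N$ between $E_{r_1}$ and $E_{r_3}$ once $N$ is chosen of order $\log(r_3/r_1)$ to make the Taylor tail smaller than the homogenization error. Carrying this balancing through while retaining the sharp logarithm is, I expect, the most technical part of the proof, together with establishing the pointwise $\Gamma_\varepsilon$--$\Gamma_0$ comparison under the weaker H\"older assumption (iii) rather than the Lipschitz one used for (1.3).
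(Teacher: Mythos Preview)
Your high-level strategy---compare $\Gamma_\varepsilon$ with $\Gamma_0$ and exploit the analyticity of $\Gamma_0$---is the right one and is what the paper does. But the way you organise Steps~2--3 is not how the argument actually runs, and the discrepancy is not cosmetic.

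The paper never produces an intermediate caloric function $v$, never truncates a Taylor series, and never invokes a ``polynomial three-ball inequality'' of the form you wrote. Instead it fixes $x_0\in E_{r_2}$, picks $m$ Chebyshev nodes $h_1,\dots,h_m\in(0,1)$, sets $x_i=h_i\,x_0\,r_1/r_2\in E_{r_1}$, and applies \emph{one-variable} complex Lagrange interpolation to the scalar function $h\mapsto\Gamma_0(h\,x_0\,r_1/r_2,t_0;y,s)$ (and to its $\nabla_y$ analogue) on a disc of radius $r_3/(3r_1)$. Plugging this interpolation identity into the representation formula for $\eta u_\varepsilon$ yields directly
\[
|u_\varepsilon(x_0,t_0)|\ \le\ \Bigl(\tfrac{2r_2}{r_1}\Bigr)^{m-1}\sup_{E_{r_1}}|u_\varepsilon(\cdot,t_0)|
\;+\;C\Bigl(\tfrac{4r_2}{r_3}\Bigr)^{m}N
\;+\;C\Bigl(\tfrac{2r_2}{r_1}\Bigr)^{m}\tfrac{\varepsilon}{r_3}\log\!\bigl(2+\tfrac{r_3}{\varepsilon}\bigr)\,N,
\]
with $N=\sup_{\tilde\Omega_{r_3,t_0}}|u_\varepsilon|$. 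The first term comes from $\sum c_i u_\varepsilon(x_i,t_0)$ and the Chebyshev bound $|c_i|\le m^{-1}(2r_2/r_1)^{m-1}$; the second from the Lagrange remainder for $\Gamma_0$; the third from $\Gamma_\varepsilon-\Gamma_0$ and $\nabla_y\Gamma_\varepsilon-(I+\nabla\tilde\chi_\varepsilon)\nabla_y\Gamma_0$ \emph{multiplied by} $\sum|c_i|$. The exponent $\alpha$ and the prefactor $r_3/r_1$ then come purely from optimising this display over the integer $m$ in two cases (whether the second or the third term dominates). There is no Markov--Bernstein step and no choice ``$N\sim\log(r_3/r_1)$''.

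Two concrete gaps in your version: (i) the representation formula forces $\nabla_y\Gamma_\varepsilon$ into the picture, whose asymptotic involves the corrector $\tilde\chi$ (this is where the H\"older hypothesis is actually used), and you do not account for it; (ii) the ``extremal/Lagrange'' three-ellipsoid inequality you state for polynomials of degree $\le N$, with $C$ independent of $N$ and that particular $\alpha$, is not a standard tool---what is true is the one-variable Chebyshev-interpolation estimate for the analytic kernel, which is exactly what the paper uses. If you pursued your route literally (define $v$, apply Hadamard to its complex extension) you would in fact bypass the $r_3/r_1$ factor, but you would need to justify the bound on $v$ over a complex ellipsoid of radius $\sim r_3$, and you would land on the Hadamard exponent rather than the stated $\alpha$; reconciling these is precisely what the Chebyshev-node computation does.
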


A direct consequence of Theorem 1.1 is the following approximate two-sphere one-cylinder inequality in balls.
\begin{cor}
Let $u_\varepsilon$ be a solution of $(1.4)$ in $B_{R}\times (-T,T)$. For $0<r_1<r_2<\mu {r_3}/12<\mu R/8$, then there holds
\begin{equation}
\sup_{B_{r_2}}|u_\varepsilon(\cdot,t_0)|\leq C\left\{ (\sup_{B_{r_1}}|u_\varepsilon(\cdot,t_0)|)^{\alpha}(\sup_{{\Omega}_{{r_3},t_0}}|u_\varepsilon|)^{1-\alpha}+\frac{r_3}{r_{1}}\left[\frac{\varepsilon}{{r_3}}\log (2+\frac{r_3}{{\varepsilon}})\right]^{\alpha}\sup_{{\Omega}_{{r_3},t_0}}|u_\varepsilon|\right\},
\end{equation} where $\alpha=\frac{\log \frac{C_1 {r_3}}{r_2}}{\log \frac{{r_3}}{2r_1}}$, and $C$ depends only on $d$, $\mu$ and $(\tau,\lambda)$ and and ${\Omega}_{{r_3},t_0}=B_{r_3} \times (t_0-r_3^2,t_0)$ is a subdomain of $B_{R}\times (-T,T)$ with $R$ and $T$ fixed.
\end{cor}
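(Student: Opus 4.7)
The plan is to derive Corollary 1.2 directly from Theorem 1.1 by translating between ellipsoids and balls using the bilateral inclusions forced by the ellipticity condition $(1.5)$. Since the ellipsoids $E_r$ introduced in Section 2 arise from a positive-definite quadratic form whose eigenvalues lie between $\mu$ and $\mu^{-1}$, one has inclusions of the shape $B_{c_\mu r}\subset E_r\subset B_{r/c_\mu}$ for a constant $c_\mu$ depending only on $\mu$ (the placement of the square root versus the first power depends on the exact definition of $E_r$, but the factor $\mu$ in the hypothesis $r_2<\mu r_3/12$ is a clear indicator of the correct normalization). I would set $\tilde{r}_1=c_\mu r_1$, $\tilde{r}_2=r_2/c_\mu$, $\tilde{r}_3=c_\mu r_3$, so that $E_{\tilde{r}_1}\subset B_{r_1}$, $B_{r_2}\subset E_{\tilde{r}_2}$, and $E_{\tilde{r}_3}\subset B_{r_3}$, together with the cylinder inclusion $\tilde{\Omega}_{\tilde r_3,t_0}\subset \Omega_{r_3,t_0}$.

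First I would verify that the chain $\tilde{r}_1<\tilde{r}_2<\tilde{r}_3/12<R/8$ required by Theorem 1.1 reduces, after absorbing $c_\mu$ into the constants, to the hypothesis $r_1<r_2<\mu r_3/12<\mu R/8$ of the corollary. Next I would apply Theorem 1.1 at the parameters $(\tilde{r}_1,\tilde{r}_2,\tilde{r}_3)$, and use the inclusions above to bound the left-hand side $\sup_{B_{r_2}}|u_\varepsilon(\cdot,t_0)|\leq \sup_{E_{\tilde r_2}}|u_\varepsilon(\cdot,t_0)|$ and to dominate the factors on the right-hand side by the ball and cylinder $\sup$-norms appearing in $(1.9)$. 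The exponent produced by Theorem 1.1 is
\[
\tilde{\alpha}=\frac{\log\frac{\tilde{r}_3}{4\tilde{r}_2}}{\log\frac{\tilde{r}_3}{2\tilde{r}_1}}=\frac{\log\frac{c_\mu^2\, r_3}{4\,r_2}}{\log\frac{r_3}{2r_1}},
\]
which has exactly the form $\alpha=\log(C_1 r_3/r_2)/\log(r_3/2r_1)$ stated in the corollary, with $C_1=c_\mu^{2}/4$.

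The remaining point to check is that the error term $\frac{\tilde r_3}{\tilde r_1}\bigl[\frac{\varepsilon}{\tilde r_3}\log(2+\tilde r_3/\varepsilon)\bigr]^{\tilde\alpha}\sup|u_\varepsilon|$ can be rewritten in terms of the original radii. Since $c_\mu$ is a constant depending only on $\mu$, the prefactor $\tilde r_3/\tilde r_1$ differs from $r_3/r_1$ by a multiplicative constant, and because $s\mapsto s\log(2+1/s)$ is monotone near the origin, the bracketed expression changes only by an absorbable constant factor. The exponent $\tilde\alpha$ itself differs from $\alpha$ by at most a constant additive shift inside the logarithm, which is again absorbed into $C$ and $C_1$. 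The main obstacle is therefore purely bookkeeping: one must track how the constants $c_\mu$ and the resulting $C_1$ propagate through both the interpolation exponent and the homogenization error term, and confirm that everything can be consolidated into the single constant $C$ depending only on $d$, $\mu$, and $(\tau,\lambda)$.
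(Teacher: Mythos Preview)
Your proposal is correct and matches the paper's approach exactly: the paper states only that ``Corollary 1.2 directly follows from Theorem 1.1 and the estimate $(2.7)$,'' where $(2.7)$ is the ball--ellipsoid inclusion $B_{\sqrt{\mu}r}\subset E_r\subset B_{\sqrt{\mu_1}r}$, and your argument carries out precisely this translation. The only refinement is that the paper's inclusion constants are $\sqrt{\mu}$ and $\sqrt{\mu_1}$ rather than a generic $c_\mu$, which resolves the normalization ambiguity you flagged.
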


\begin{rmk}
Compared with the Lipschitz regularity needed to obtain the inequality $(1.3)$, only H\"{o}lder continuity is imposed to obtain the inequality $(1.9)$, with the different exponent $\theta$. Moreover, as $\varepsilon\rightarrow 0$ with $r_3$ fixed, the inequality $(1.9)$ converges to the standard two-sphere one-cylinder inequality $(1.3)$ (with the different exponent $\theta$). However, if $r_3\backsim \varepsilon$, then the inequality $(1.9)$ gives us nothing, since in the $(\varepsilon,\varepsilon^2)$-scale, the operator $\partial_t -\operatorname{div}\left(A(x/\varepsilon,t/\varepsilon^2)\nabla \cdot\right)$ behaves like the classical operator $\partial_t -\operatorname{div}\left(A(x,t)\nabla \cdot\right)$ after a change of variables, where the Lipschitz regularity needed to obtain the two-sphere one-cylinder inequality.
\end{rmk}
This interpolation method may also apply to the parabolic equation with potential. Namely, let $u_\varepsilon$ satisfies the following equation
\begin{equation}
\partial_t u_\varepsilon-\operatorname{div}\left(A(x/\varepsilon,t/\varepsilon^2)\nabla u_\varepsilon\right)+V^\varepsilon u_\varepsilon=0,
\end{equation}
where $V^\varepsilon=V(x/\varepsilon)$
with $V$ being 1-periodic and $V\in L^{\frac{d+2}{2}}(Z)$ with $Z=[0,1)^d$. Note that $V$ is independent of the variable $t$. Consequently,  we are able to establish the following approximate two-sphere one-cylinder
 inequality in ellipsoids for the solution to $(1.10)$.
\begin{thm}(Interior two-sphere one-cylinder inequality) Let $u_\varepsilon$ be a solution of $(1.10)$ in $B_{{R}}\times (-T,T)$. For $0<r_1<r_2<{r_3}/12<R/8$, and $\varepsilon\leq r_3$, then there holds
\begin{equation}
\sup_{E_{r_2}}|u_\varepsilon(\cdot,t_0)|\leq C\left\{ (\sup_{E_{r_1}}|u_\varepsilon(\cdot,t_0)|)^{\alpha}(\sup_{\tilde{\Omega}_{{r_3},t_0}}|u_\varepsilon|)^{1-\alpha}+\frac{r_3}{r_{1}}\left[\frac{\varepsilon}{{r_3}}\log (2+\frac{r_3}{{\varepsilon}})\right]^{\alpha}\sup_{\tilde{\Omega}_{{r_3},t_0}}|u_\varepsilon|\right\},
\end{equation} where $\alpha=\frac{\log \frac{{r_3}}{4r_2}}{\log \frac{{r_3}}{2r_1}}$, and $C$ depends only on $d$, $\mu$, $R$, $T$ and $(\tau,\lambda)$, and $\tilde{\Omega}_{{r_3},t_0}=E_{r_3} \times (t_0-{r_3}^2,t_0)$ is a subdomain of $B_{R}\times (-T,T)$ with $R$ and $T$ fixed.
\end{thm}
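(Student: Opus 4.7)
The plan is to reproduce the proof of Theorem 1.1 with the fundamental solution of the full Schrödinger-type parabolic operator $\mathcal{L}_\varepsilon^V := \partial_t - \operatorname{div}\bigl(A(\cdot/\varepsilon,\cdot/\varepsilon^2)\nabla \,\cdot\,\bigr) + V^\varepsilon$ in place of the one used there. Write $\Gamma_\varepsilon^V(x,t;y,s)$ for this fundamental solution and $\Gamma_0^{\bar V}$ for the corresponding homogenized fundamental solution, where $\bar V = \int_Z V$ and the homogenized diffusion is $\widehat A$. I would begin by representing $u_\varepsilon(\cdot,t_0)$ on the inner ellipsoid $E_{r_2}$ via an integral of $u_\varepsilon$ over the parabolic boundary of $\tilde\Omega_{r_3,t_0}$ against $\Gamma_\varepsilon^V$, and then decompose
\begin{equation*}
\Gamma_\varepsilon^V = \Gamma_0^{\bar V} + \bigl(\Gamma_\varepsilon^V - \Gamma_0^{\bar V}\bigr),
\end{equation*}
so that the strategy of Theorem 1.1 applies to the homogenized piece and the remainder absorbs the $\varepsilon$-error.

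The analytic core is a convergence rate of the form
\begin{equation*}
\bigl|\Gamma_\varepsilon^V(x,t;y,s) - \Gamma_0^{\bar V}(x,t;y,s)\bigr| \le C\,\frac{\varepsilon}{(t-s)^{(d+1)/2}}\log\!\Bigl(2 + \frac{(t-s)^{1/2}}{\varepsilon}\Bigr)\, e^{-c|x-y|^2/(t-s)},
\end{equation*}
paralleling the estimate already used for $|\Gamma_\varepsilon - \Gamma_0|$ in the potential-free case. I would derive it by Duhamel iteration around the potential-free kernels,
\begin{equation*}
\Gamma_\varepsilon^V = \Gamma_\varepsilon - \Gamma_\varepsilon * \bigl(V^\varepsilon \Gamma_\varepsilon^V\bigr),
\end{equation*}
together with the analogous identity for $\Gamma_0^{\bar V}$, combining the known $\varepsilon$-rate for $|\Gamma_\varepsilon - \Gamma_0|$ with two-sided Gaussian bounds for $\Gamma_\varepsilon^V$ and $\Gamma_0^{\bar V}$. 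These Gaussian bounds hold uniformly in $\varepsilon$ because $V \in L^{(d+2)/2}(Z)$ embeds into the parabolic Kato class and this norm is invariant under the $\varepsilon$-rescaling $V(\cdot) \mapsto V(\cdot/\varepsilon)$. The assumption $\varepsilon \le r_3$ is precisely what keeps the logarithmic factor moderate and the Duhamel series absolutely convergent on the time horizon $T$; the dependence of $C$ on $R$ and $T$ enters here through the $L^{(d+2)/2}$-norm of $V$ paired with Gaussian kernels over the bounded cylinder.

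With the representation and the kernel convergence rate available, the remainder of the argument proceeds exactly as in Theorem 1.1. The $\Gamma_0^{\bar V}$-piece satisfies a classical two-sphere one-cylinder inequality of type $(1.3)$ (the homogenized coefficients $\widehat A$ are smooth and $\bar V$ is a constant, so the Vessella/Escauriaza bound applies up to a harmless perturbation), and the Lagrange interpolation step produces the convex combination with exponent $\alpha = \log(r_3/4r_2)/\log(r_3/2r_1)$. The remainder piece is controlled using the displayed kernel estimate, giving the additive error $\tfrac{r_3}{r_1}\bigl[\tfrac{\varepsilon}{r_3}\log(2+\tfrac{r_3}{\varepsilon})\bigr]^\alpha \sup_{\tilde\Omega_{r_3,t_0}}|u_\varepsilon|$, with the prefactor $r_3/r_1$ arising from raising the geometric loss of the interpolation to the power $\alpha$.

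The main obstacle is the convergence rate step: the potential $V$ is merely in $L^{(d+2)/2}(Z)$, so each term in the Duhamel expansion must be shown to inherit the Gaussian decay and the $O(\varepsilon \log(r_3/\varepsilon))$-rate, with constants controlled solely by $\|V\|_{L^{(d+2)/2}(Z)}$ and the time horizon. This requires careful bookkeeping of the convolutions $\int \Gamma_\varepsilon(x,t;z,\sigma)\,V^\varepsilon(z)\,\Gamma_\varepsilon^V(z,\sigma;y,s)\,dz\,d\sigma$ using parabolic Hölder regularity of the homogenized kernel and Sobolev-type bounds pairing potentials in $L^{(d+2)/2}$ with Gaussians; it is the reason why the constant $C$ in $(1.11)$, in contrast with Theorem 1.1, now depends on $R$ and $T$.
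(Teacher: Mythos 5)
Your overall architecture differs from the paper's, and the place where it differs is exactly where the real difficulty sits, and your proposal does not close it. The paper never introduces a fundamental solution for the operator with potential. It conjugates the solution, $v_\varepsilon=e^{\mathcal{M}(V)t}u_\varepsilon$ with $\mathcal{M}(V)=\int_Z V$, so that $v_\varepsilon$ solves the potential-free equation with the source $(\mathcal{M}(V)-V(x/\varepsilon))v_\varepsilon$, and then runs the representation of Theorem 1.1 with the same kernels $\Gamma_\varepsilon,\Gamma_0$. The only new term is $I_3=\int\!\!\int(\Gamma_\varepsilon(x_0,t_0;y,s)-c_i\Gamma_\varepsilon(x_i,t_0;y,s))(\mathcal{M}(V)-V(y/\varepsilon))v_\varepsilon\eta$, and the factor $\varepsilon$ there is extracted from the \emph{mean-zero oscillation} of $\mathcal{M}(V)-V$: one solves the cell problem $\Delta_z\psi=\mathcal{M}(V)-V$ with $\psi$ periodic, writes $(\mathcal{M}(V)-V)(y/\varepsilon)=\varepsilon^2\Delta_y\bigl(\psi(y/\varepsilon)\bigr)$, integrates by parts, and controls the resulting terms with Gaussian size bounds, a Caccioppoli-type gradient bound for $v_\varepsilon$, H\"older's inequality and the Sobolev embedding for $\nabla\psi$ (this is where $\varepsilon\le r_3$ and $V\in L^{\frac{d+2}{2}}(Z)$ are used). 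The dependence on $R,T$ only enters when undoing the conjugation $e^{\mathcal{M}(V)t}$ on $(-T,T)$.

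Your route instead hinges on the convergence rate $|\Gamma_\varepsilon^V-\Gamma_0^{\bar V}|\lesssim \varepsilon(t-s)^{-(d+1)/2}\log(2+\varepsilon^{-1}(t-s)^{1/2})e^{-c|x-y|^2/(t-s)}$, and the ingredients you list cannot produce it. In the Duhamel comparison the problematic term is
\begin{equation*}
\int\!\!\int \Gamma_0(x,t;z,\sigma)\,\bigl(V(z/\varepsilon)-\bar V\bigr)\,\Gamma_0^{\bar V}(z,\sigma;y,s)\,dz\,d\sigma ,
\end{equation*}
and $V(\cdot/\varepsilon)-\bar V$ is \emph{not} small in any of the norms you invoke: its $L^{\frac{d+2}{2}}$ norm on a fixed cylinder is $O(1)$, and Gaussian/Kato-class bounds plus the known rate for $|\Gamma_\varepsilon-\Gamma_0|$ only give an $O(1)$ estimate for this term. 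The $O(\varepsilon)$ gain can only come from the oscillatory mean-zero structure, i.e., precisely the corrector trick ($\Delta\psi=\bar V-V$ and integration by parts, or an equivalent averaging lemma) that your proposal never names; "careful bookkeeping of the convolutions" with H\"older regularity and Sobolev pairing does not substitute for it. In effect your kernel estimate is an unproved homogenization theorem whose proof would have to contain the paper's key step anyway, so as written the argument has a genuine gap at its central point. (The remaining parts of your plan are workable: with a constant potential $\bar V$ one has $\Gamma_0^{\bar V}=e^{-\bar V(t-s)}\Gamma_0$, which is explicit, so the Lagrange interpolation with analytic extension would go through; there is no need to invoke the Vessella--Escauriaza inequality, which the paper does not use.)
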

\begin{rmk}1.It should be noticed that the unique continuation property of the solution $u$ to the equation $\partial_t u-\operatorname{div}(A(x,t)\nabla u)+Vu=0$ has been obtained under the assumption that $A\in C^{2,1}$ and $V\in L^{\frac{d+2}{2}}_{loc}(dxdt)$ in \cite{1990A}. To some extend, we have extended this result to parabolic equation in homogenization. We may refer readers to \cite{escauriaza2000carleman,escauriaza2004unique,escauriaza2001carleman,1990A,koch2009carleman} and their references therein for more results about the nonzero potential.

2. The method used in Theorem 1.4 can easily apply to the equation $-\operatorname{div}\left(A(x/\varepsilon)\nabla u_\varepsilon\right)+V^\varepsilon u_\varepsilon=0$ with suitable $V^\varepsilon$.
\end{rmk}
\section{Preliminaries}

Let $\mathcal{L}_\varepsilon =-\operatorname{div}\left(A_\varepsilon(x,t)\nabla \right)$, where $A_\varepsilon(x,t)=A(x/\varepsilon,t/\varepsilon^2)$.
Assume that $A(y,s)$ is 1-periodic in $(y,s)$ and satisfies the ellipticity condition $(1.5)$. For $1\leq j\leq d$, the corrector $\chi_j=\chi_j(y,s)$ is defined as the weak solution to the following cell problem:
\begin{equation}\left\{\begin{array}{l}
\left(\partial_{s}+\mathcal{L}_{1}\right)\left(\chi_{j}\right)=-\mathcal{L}_{1}\left(y_j\right) \quad \text { in } Y, \\
\chi_{j}=\chi_{j}^{\beta}(y, s) \text { is } 1 \text { -periodic in }(y, s), \\
\int_{Y} \chi_{j}^{\beta}=0,
\end{array}\right.\end{equation}where $Y=[0,1)^{d+1}.$ Note that

\begin{equation}
\left(\partial_{s}+\mathcal{L}_{1}\right)\left(\chi_{j}+y_j\right)=0\ \text{in }\mathbb{R}^{d+1}.
\end{equation}By the rescaling property of $\partial_t+\mathcal{L}_\varepsilon$, we obtain that

\begin{equation}
\left(\partial_{t}+\mathcal{L}_{\varepsilon}\right)\left(\varepsilon\chi_{j}(x/\varepsilon,t/\varepsilon^2)+y_j\right)=0\ \text{in }\mathbb{R}^{d+1}.
\end{equation}Moreover, if $A=A(y,s)$ is H\"{o}lder continuous in $(y,s)$, then by standard regularity for $\partial_s+\mathcal{L}_1$, $\nabla \chi_j(y,s)$ is H\"{o}lder continuous in $(y,s)$, thus $\nabla \chi_j(y,s)$ is bounded.\\

Let $\widehat{A}=(\widehat{a}_{ij})$, where $1\leq i,j\leq d$, and
\begin{equation}
\widehat{a}_{ij}=\fint_Y\left(a_{ij}+a_{ik}\frac{\partial \chi_j}{\partial y_k}\right)dyds.
\end{equation}
It is known that the constant matrix $\widehat{A}$ satisfies the ellipticity condition,
\begin{equation*}
\mu |\xi|^2\leq \widehat{a}_{ij}\xi_i\xi_j,\quad\quad \text{for any }\xi\in\mathbb{R}^d,
\end{equation*}and $|\widehat{a}_{ij}|\leq \mu_1$, where $\mu_1$ depends only on $d$ and $\mu$ \cite{bensoussan2011asymptotic}. It is also true or easy to verify  that $(\widehat{a_{ij}})$ is symmetric if $(a_{ij})$ is symmetric. Denote
$$\mathcal{L}_0=-\operatorname{div}(\widehat{A}\nabla).$$

Then $\partial_t+\mathcal{L}_0$ is the homogenized operator for the family of parabolic operators $\partial_t+\mathcal{L}_\varepsilon$, with $1>\varepsilon>0$. Since $\widehat{A}$ is symmetric and positive definite, there exists a $d\times d$ matrix $S$ with $\det(S)>0$ such that $S\widehat{A}S^T=I_{d\times d}$. Note that
$\widehat{A}^{-1}=S^TS$ and
\begin{equation}
\langle \widehat{A}^{-1}x,x\rangle=|Sx|^2.
\end{equation}
We introduce a family of ellipsoids as
\begin{equation}
E_r(\widehat{A})=\{x\in\mathbb{R}^n:\langle\widehat{A}^{-1}x,x\rangle< r^2\}.
\end{equation} It is easy to see that
\begin{equation}B_{\sqrt{\mu}r}(0)\subset E_r(0)\subset B_{\sqrt{\mu_1}r}(0).\end{equation}We will write $E_r(\widehat{A})$ as $E_r$ if the context is understood.

To move forward, let $\Gamma_\varepsilon(x,t;y,s)$ and $\Gamma_0(x,t;y,s)$ denote the fundamental solutions for the parabolic operators $\partial_t+\mathcal{L}_\varepsilon$, with $1>\varepsilon>0$ and the homogenized operator $\partial_t+\mathcal{L}_0$, respectively. Moreover, it is easy to see that
\begin{equation}
\Gamma_0(x,t;y,s)=\frac{1}{\left(2\sqrt{\pi}\right)^d}\left(t-s\right)^{-d/2}|S|\exp\left\{-\frac{|Sx-Sy|^2}{4(t-s)}\right\},
\end{equation}
for any $x,y\in\mathbb{R}^d$ and $-\infty<s<t<\infty$ with the matrix $S$ defined in $(2.5)$.\\

The following lemmas state the asymptotic behaviors of $\Gamma_\varepsilon(x,t;y,s)$ with $1>\varepsilon>0$, whose proof could be found in \cite{geng2020asymptotic}.

\begin{lemma}Suppose that the coefficient matrix $A$ satisfies the assumptions $(1.5)$ and $(1.6)$, then
\begin{equation}\left|\Gamma_{\varepsilon}(x, t ; y, s)-\Gamma_{0}(x, t ; y, s)\right| \leq \frac{C \varepsilon}{(t-s)^{\frac{d+1}{2}}} \exp \left\{-\frac{\kappa|x-y|^{2}}{t-s}\right\}\end{equation}for any $x,y\in\mathbb{R}^d$  and $-\infty<s<t<\infty$, where $\kappa>0$ depends only on $\mu$. The constant C depends only on $d$ and $\mu$.
\end{lemma}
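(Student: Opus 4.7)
The plan is to compare $\Gamma_\varepsilon$ with the first-order two-scale corrected approximation of $\Gamma_0$. First, fix a source point $(y_0,s_0)$ and set $u(x,t)=\Gamma_0(x,t;y_0,s_0)$, then define the two-scale expansion
$$\tilde u_\varepsilon(x,t) = u(x,t) + \varepsilon\,\chi_j(x/\varepsilon,t/\varepsilon^2)\,\partial_j u(x,t).$$
Writing $u_\varepsilon(x,t)=\Gamma_\varepsilon(x,t;y_0,s_0)$, the remainder $r_\varepsilon = u_\varepsilon - \tilde u_\varepsilon$ satisfies a parabolic equation $(\partial_t+\mathcal{L}_\varepsilon)r_\varepsilon = F_\varepsilon$, where $F_\varepsilon$ is an $O(\varepsilon)$ source built from $\nabla^2 u$ and derivatives of the correctors $\chi_j$; the identity $(2.3)$ together with the corrector equation is what makes $F_\varepsilon$ small and, in fact, expressible as the divergence of an $O(\varepsilon)$ flux by introducing the usual flux correctors.

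Next, I would apply duality: represent $r_\varepsilon$ at the observation point as an integral of $F_\varepsilon$ against the adjoint fundamental solution $\Gamma_\varepsilon^*$. The key inputs are (i) the Aronson-type Gaussian upper bound $|\Gamma_\varepsilon(x,t;y,s)|\leq C(t-s)^{-d/2}\exp\{-\kappa|x-y|^2/(t-s)\}$, uniform in $\varepsilon$, which follows from Nash--De Giorgi--Moser theory; (ii) the explicit Gaussian form of $\Gamma_0$, which yields $|\nabla_x^k \Gamma_0|\lesssim (t-s)^{-(d+k)/2}\exp\{-\kappa'|x-y|^2/(t-s)\}$; and (iii) the bounds $\|\chi_j\|_\infty + \|\nabla \chi_j\|_\infty \leq C$ obtained from the H\"older continuity of $A$ and interior Schauder regularity for $\partial_s+\mathcal{L}_1$. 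Inserting these into the double convolution and exploiting the semigroup/reproducing property of Gaussians should produce the claimed bound $C\varepsilon(t-s)^{-(d+1)/2}\exp\{-\kappa|x-y|^2/(t-s)\}$ for $r_\varepsilon$, and hence for $\Gamma_\varepsilon-\Gamma_0$ after absorbing the corrector term, which itself is dominated by $\varepsilon\|\chi\|_\infty |\nabla u|$ and thus has the same form.

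The main obstacle is propagating the Gaussian factor $\exp\{-\kappa|x-y|^2/(t-s)\}$ through the duality argument: the convolution of two Gaussians is again a Gaussian, but typically with a smaller exponent, so one must carefully track the degradation of $\kappa$ while integrating over the intermediate space-time variable. I would split the integration into a near regime, where the singular factor $(t-s)^{-(d+1)/2}$ is produced by the loss of one derivative on $u$, and a far regime, where the exponential decay absorbs polynomial losses in the gradient of $\Gamma_0$. A secondary difficulty is that $\tilde u_\varepsilon$ is singular at $(y_0,s_0)$; to deal with this I would truncate the corrector term on a parabolic ball of radius $\sim\varepsilon$ around the source, where the desired inequality is immediate from the separate Aronson bounds on $\Gamma_\varepsilon$ and $\Gamma_0$ (both are bounded by $C\varepsilon^{-d}$ there, which is $\leq C\varepsilon(t-s)^{-(d+1)/2}$ on that scale).
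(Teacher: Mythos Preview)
The paper does not give a proof of this lemma; it is quoted from \cite{geng2020asymptotic}. Your outline---two-scale expansion, flux correctors to put the remainder source in divergence form, then a fundamental-solution representation and Gaussian convolution---is indeed the strategy used there.

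There is, however, one genuine gap. The lemma assumes only (1.5) and (1.6), and the constant is asserted to depend only on $d$ and $\mu$; H\"older continuity (1.7) is \emph{not} available. Your input (iii) invokes it to get $\|\nabla\chi_j\|_\infty\le C$, and more importantly, once you write $F_\varepsilon=\varepsilon\operatorname{div}(G_\varepsilon)$ and integrate by parts in the representation formula, you need a Gaussian bound on $\nabla_y\Gamma_\varepsilon^*$. The pointwise estimate $|\nabla_y\Gamma_\varepsilon|\le C(t-s)^{-(d+1)/2}\exp\{-\kappa|x-y|^2/(t-s)\}$ is not a consequence of Nash--Aronson alone; it requires H\"older coefficients (the paper itself only invokes it in Section~4 under that extra hypothesis).

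The repair, as carried out in the cited reference, is to replace pointwise gradient bounds by the \emph{large-scale} Lipschitz estimates of periodic homogenization: under (1.5)--(1.6) alone one has, for scales $r\ge\varepsilon$,
\[
\left(\fint_{B_r(y)\times(s-r^2,s)}|\nabla_{y}\Gamma_\varepsilon(x,t;\cdot,\cdot)|^2\right)^{1/2}\le \frac{C}{(t-s)^{(d+1)/2}}\exp\Big\{-\frac{\kappa|x-y|^2}{t-s}\Big\}.
\]
Pairing this averaged bound with $\|\chi_j\|_\infty+\|\phi\|_\infty\le C$ (De Giorgi--Nash--Moser suffices here; no Schauder is needed once the remainder is in divergence form) and the explicit Gaussian decay of $\nabla^2\Gamma_0$, the convolution closes exactly as you describe. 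Your $\varepsilon$-scale truncation near the source and the near/far splitting are the right complementary ideas.
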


The next lemma states the asymptotic behaviors of $\nabla_x\Gamma_\varepsilon(x,t;y,s)$ and $\nabla_y\Gamma_\varepsilon(x,t;y,s)$.
\begin{lemma}Suppose that the coefficient matrix $A$ satisfies the assumptions $(1.5)$, $(1.6)$ and $(1.7)$, then
\begin{equation}\begin{aligned}
\left|\nabla_{x} \Gamma_{\varepsilon}(x, t ; y, s)-\left(I+\nabla \chi\left(x / \varepsilon, t / \varepsilon^{2}\right)\right) \nabla_{x} \Gamma_{0}(x, t ; y, s)\right| \\
\quad \leq \frac{C \varepsilon}{(t-s)^{\frac{d+2}{2}}} \log \left(2+\varepsilon^{-1}|t-s|^{1 / 2}\right) \exp \left\{-\frac{\kappa|x-y|^{2}}{t-s}\right\}
\end{aligned}\end{equation}
for any $x,y\in\mathbb{R}^d$  and $-\infty<s<t<\infty$, where $\kappa>0$ depends only on $\mu$. The constant C depends only on $d$, $\mu$ and $(\tau,\lambda)$ in $(1.7)$. Similarly, there holds
\begin{equation}\begin{aligned}
\left|\nabla_{y} \Gamma_{\varepsilon}(x, t ; y, s)-\left(I+\nabla \widetilde{\chi}\left(y / \varepsilon,-s / \varepsilon^{2}\right)\right) \nabla_{y} \Gamma_{0}(x, t ; y, s)\right| \\
\quad \leq \frac{C \varepsilon}{(t-s)^{\frac{d+2}{2}}} \log \left(2+\varepsilon^{-1}|t-s|^{1 / 2}\right) \exp \left\{-\frac{\kappa|x-y|^{2}}{t-s}\right\},
\end{aligned}\end{equation}where $\tilde{\chi}(y,s)$ denote the correctors for $\partial_t+\tilde{\mathcal{L}}_\varepsilon$ with $\tilde{\mathcal{L}}_\varepsilon=-\operatorname{div}\left(A(x/\varepsilon,-t/\varepsilon^2)\nabla\right)$.
\end{lemma}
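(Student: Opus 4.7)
The estimate is a gradient version of Lemma 2.1, and the natural way to prove it is via a two-scale expansion of $\Gamma_\varepsilon(\cdot\,;y,s)$ combined with an Avellaneda--Lin type interior Lipschitz estimate for $\partial_t+\mathcal{L}_\varepsilon$. I would first reduce to the main case $\sqrt{t-s}\geq\varepsilon$: when $\sqrt{t-s}<\varepsilon$, the logarithmic factor is bounded by a constant, and the target bound reduces to the standard Aronson/Geng--Shen Gaussian gradient estimate $|\nabla_x\Gamma_\varepsilon|\leq C(t-s)^{-(d+1)/2}\exp\{-\kappa|x-y|^2/(t-s)\}$, combined with the analogous bound for $\nabla_x\Gamma_0$ and the boundedness of $\nabla\chi$.

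\textbf{Two-scale expansion.} Set $u_\varepsilon(x,t)=\Gamma_\varepsilon(x,t;y,s)$ and $u_0(x,t)=\Gamma_0(x,t;y,s)$ on a parabolic cylinder $Q=B_{r/4}(x_0)\times(t_0-r^2/16,t_0)$ with $r=\sqrt{t_0-s}$, on which both $u_\varepsilon$ and $u_0$ solve their homogeneous equations. Define the corrected approximation
$$v_\varepsilon(x,t)=u_0(x,t)+\varepsilon\,\chi_k(x/\varepsilon,t/\varepsilon^2)\,\partial_{x_k}u_0(x,t),$$
and the remainder $w_\varepsilon=u_\varepsilon-v_\varepsilon$. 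Using the cell problem (2.2) and the parabolic flux correctors associated to $A+A\nabla\chi-\widehat{A}$, one computes
$$(\partial_t+\mathcal{L}_\varepsilon)w_\varepsilon=\varepsilon\,\mathrm{div}\,F_\varepsilon+\varepsilon\,G_\varepsilon,$$
with $F_\varepsilon,G_\varepsilon$ involving $\chi$, the flux corrector $\phi$, $\nabla^2 u_0$ and $\partial_t u_0$. The explicit form (2.8) of $\Gamma_0$ yields $|\nabla^2 u_0|+|\partial_t u_0|\leq Cr^{-(d+2)}\exp\{-\kappa|x-y|^2/(t-s)\}$ on $Q$, and Lemma 2.1 (together with the boundedness of $\chi$) controls $\|w_\varepsilon\|_{L^\infty(Q)}\leq C\varepsilon\,r^{-(d+1)}\exp\{-\kappa|x-y|^2/(t-s)\}$.

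\textbf{Lipschitz estimate with rate.} Rescaling $w_\varepsilon$ to unit parabolic scale and invoking the Avellaneda--Lin--Geng--Shen interior Lipschitz estimate for $\partial_t+\mathcal{L}_\varepsilon$ applied to a solution with right-hand side $\mathrm{div}\,F_\varepsilon+G_\varepsilon$, one gets the pointwise bound
$$|\nabla w_\varepsilon(x_0,t_0)|\leq\frac{C}{r}\|w_\varepsilon\|_{L^\infty(Q)}+C\log\!\left(2+\tfrac{r}{\varepsilon}\right)\bigl(\|F_\varepsilon\|_{L^\infty(Q)}+r\,\|G_\varepsilon\|_{L^\infty(Q)}\bigr).$$
The logarithmic factor appears naturally from the Campanato-type iteration in the Avellaneda--Lin compactness argument and is exactly the origin of $\log(2+\varepsilon^{-1}|t-s|^{1/2})$ in (2.10). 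Since $\nabla v_\varepsilon=(I+\nabla\chi^\varepsilon)\nabla u_0+\varepsilon\chi^\varepsilon\nabla^2 u_0$ and the last term is $O(\varepsilon r^{-(d+2)}e^{-\kappa|x-y|^2/(t-s)})$, it can be absorbed into the right-hand side of (2.10).

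\textbf{Gradient in $y$ and main obstacle.} The companion estimate (2.11) for $\nabla_y\Gamma_\varepsilon$ follows from the same scheme applied to the adjoint: $\Gamma_\varepsilon(x,t;\cdot,\cdot)$ is a solution of the backward-in-$s$ equation governed by $\tilde{\mathcal{L}}_\varepsilon=-\mathrm{div}(A(y/\varepsilon,-s/\varepsilon^2)\nabla)$, whose correctors are precisely the $\tilde\chi$ in (2.11); the structural hypotheses (1.5)--(1.7) are preserved by this time reversal. The principal technical obstacle is the refinement of the interior Lipschitz estimate so that only a single power of $\log(2+r/\varepsilon)$ (rather than a power of $r/\varepsilon$) enters the bound; this requires the full strength of the parabolic Avellaneda--Lin compactness argument together with the H\"older assumption (1.7), which supplies the Lipschitz estimate for the homogenized operator that the iteration feeds on.
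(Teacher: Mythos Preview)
The paper does not prove this lemma; it simply cites \cite{geng2020asymptotic} (Geng--Shen) for the proof. Your proposal is correct and is precisely the strategy used there: reduce to $\sqrt{t-s}\geq\varepsilon$ via the Aronson-type gradient bounds, form the two-scale expansion $v_\varepsilon=u_0+\varepsilon\chi_k^\varepsilon\partial_k u_0$, compute $(\partial_t+\mathcal{L}_\varepsilon)(u_\varepsilon-v_\varepsilon)=\varepsilon\,\mathrm{div}F_\varepsilon+\varepsilon G_\varepsilon$ using the flux correctors, and then apply the large-scale interior Lipschitz estimate for $\partial_t+\mathcal{L}_\varepsilon$ on the parabolic cylinder of radius $r=\sqrt{t_0-s}$, which produces the single $\log(2+r/\varepsilon)$ factor. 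The only point worth flagging is that in Geng--Shen the logarithm does not come from a Campanato iteration per se, but from summing the contributions of dyadic annuli when integrating the fundamental-solution representation of $\nabla w_\varepsilon$ against $F_\varepsilon$; this is equivalent to what you wrote but is the concrete mechanism you would need to display if writing out the details. The adjoint argument for (2.11) is exactly as you describe.
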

With the summation convention this means that for $1\leq i,j\leq d$,
\begin{equation}
\left|\frac{\partial\Gamma_{\varepsilon}(x, t ; y, s)}{\partial x_i}-\frac{\partial\Gamma_{0}(x, t ; y, s)}{\partial x_i}-\frac{\partial\chi_j\left(x / \varepsilon, t / \varepsilon^{2}\right)}{\partial x_i}\frac{ \partial \Gamma_{0}(x, t ; y, s)}{\partial x_j}\right|
\end{equation} is bounded by the RHS of $(2.10)$. And the similar result holds for $\nabla_{y} \Gamma_{\varepsilon}(x, t ; y, s)$.

The next lemma will be frequently used in the proof of Theorem 1.1.
\begin{lemma}
Let $u_\varepsilon$ be a weak solution of $\partial_t u_\varepsilon+\mathcal{L}_\varepsilon u_\varepsilon=0$ in $B_{R}\times (-T,T)$, then
\begin{equation}
\int_{t-r_3^2}^t\int_{E_{4{r_3}/5}\backslash E_{3{r_3}/4}}|\nabla u_\varepsilon|^2(x,s)dxds\leq Cr_3^d||u_\varepsilon||^2_{L^\infty\left(E_{r_3}\times (t-r_3^2,t)\right)},
\end{equation} where $C$ depends only on $\mu$ and $d$, and $E_{r_3}\times (t-r_3^2,t)$ is a subdomain of $B_{R}\times (-T,T)$.
\end{lemma}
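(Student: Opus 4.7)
The plan is the standard parabolic Caccioppoli energy estimate, adapted to the ellipsoidal geometry; the homogenization structure plays no role here, only the uniform ellipticity $(1.5)$ and the inclusions $B_{\sqrt{\mu}r}\subset E_r\subset B_{\sqrt{\mu_1}r}$ from $(2.7)$. First I would construct a spatial cutoff $\phi\in C_c^\infty(\mathbb{R}^d)$ with $0\leq\phi\leq 1$, $\phi\equiv 1$ on $E_{4r_3/5}$, $\operatorname{supp}(\phi)\subset E_{9r_3/10}$, and $|\nabla\phi|\leq C(d,\mu)/r_3$. Because $E_r=S^{-1}(B_r)$ with $\|S\|+\|S^{-1}\|\leq C(d,\mu)$, such a cutoff is obtained by pulling back a standard radial bump on balls, and the gradient estimate follows from the chain rule. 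Note also that $E_{9r_3/10}$ is compactly contained in $E_{r_3}$, so $\phi$ is admissible as a test function on the given cylinder.

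Next I would test the weak form of $\partial_t u_\varepsilon+\mathcal{L}_\varepsilon u_\varepsilon=0$ against $u_\varepsilon\phi^2$ on $E_{r_3}\times(t-r_3^2,t_0)$ for an arbitrary $t_0\in(t-r_3^2,t)$. Since $\phi$ is independent of time, $(\partial_s u_\varepsilon)u_\varepsilon=\tfrac{1}{2}\partial_s(u_\varepsilon^2)$, and an integration by parts in time (justified by Steklov averaging for weak solutions) gives
\[
\tfrac{1}{2}\int u_\varepsilon^2\phi^2\,dx\Big|_{s=t_0}-\tfrac{1}{2}\int u_\varepsilon^2\phi^2\,dx\Big|_{s=t-r_3^2}+\int_{t-r_3^2}^{t_0}\int A_\varepsilon\nabla u_\varepsilon\cdot\nabla(u_\varepsilon\phi^2)\,dx\,ds=0.
\]
Expanding $\nabla(u_\varepsilon\phi^2)=\phi^2\nabla u_\varepsilon+2\phi u_\varepsilon\nabla\phi$, applying the lower bound $A_\varepsilon\xi\cdot\xi\geq\mu|\xi|^2$ and using Young's inequality to absorb the cross term yields, after dropping the non-negative boundary contribution at $s=t_0$,
\[
\tfrac{\mu}{2}\int_{t-r_3^2}^{t_0}\int\phi^2|\nabla u_\varepsilon|^2\,dx\,ds\leq \tfrac{1}{2}\int u_\varepsilon^2\phi^2(x,t-r_3^2)\,dx+C\int_{t-r_3^2}^{t_0}\int u_\varepsilon^2|\nabla\phi|^2\,dx\,ds.
\]

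Finally I would let $t_0\to t^-$. Using $|\operatorname{supp}(\phi)|\leq Cr_3^d$, $|\nabla\phi|\leq C/r_3$ and the $L^\infty$ bound on $u_\varepsilon$, the first right-hand term is $\leq Cr_3^d\|u_\varepsilon\|_{L^\infty}^2$, while the second is $\leq C\cdot r_3^2\cdot r_3^{-2}\cdot r_3^d\|u_\varepsilon\|_{L^\infty}^2=Cr_3^d\|u_\varepsilon\|_{L^\infty}^2$, the norm being taken over $E_{r_3}\times(t-r_3^2,t)$. Since $\phi\equiv 1$ on $E_{4r_3/5}\supset E_{4r_3/5}\setminus E_{3r_3/4}$, restricting the left-hand spatial integration to this annulus yields $(2.13)$, after absorbing the factor $\mu/2$ into $C$. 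The argument is essentially routine; the only mild points requiring care are the ellipsoidal cutoff construction and the Steklov-averaging justification of the time integration by parts, and I do not foresee any genuine obstacle.
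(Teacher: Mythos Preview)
Your proof is correct and follows essentially the same standard Caccioppoli argument as the paper: multiply the equation by $\phi^2 u_\varepsilon$, integrate, absorb the cross term via Young's inequality, and use the $L^\infty$ bound and volume count. The only cosmetic difference is that the paper chooses an annular cutoff (equal to $1$ on $E_{4r_3/5}\setminus E_{3r_3/4}$ and vanishing on $E_{r_3/2}\cup(\mathbb{R}^d\setminus E_{r_3})$) rather than your solid cutoff supported in $E_{9r_3/10}$, which is immaterial for the estimate.
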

\begin{proof}
The proof is standard. Choosing a cut-off function $\varphi\in [0,1]$ such that $\varphi(x)=1$ if $x\in E_{4{r_3}/5}\backslash E_{3{r_3}/4}$, and $\varphi(x)=0$ if $x\in E_{{r_3}/2}\cup \{\mathbb{R}^d\backslash E_{r_3}\}$ together with $|\nabla \varphi|\leq C/{r_3}$, then multiplying the equation $\partial_t u_\varepsilon+\mathcal{L}_\varepsilon u_\varepsilon=0$ by $\varphi^2u_\varepsilon$ and integrating the resulting equation over $B_{R}\times (t-r_3^2,t)$ leads to

\begin{equation}\begin{aligned}
&\int_{B_{R}}\varphi^2u_\varepsilon^2(x,t)dx+\int_{t-r_3^2}^t\int_{B_{R}}\varphi^2|\nabla u_\varepsilon|^2(x,s)dxds\\
\leq& C\int_{B_{R}}\varphi^2u_\varepsilon^2(x,t-{r_3}^2)dx+C\int_{t-{r_3}^2}^t\int_{B_{R}}|\nabla \varphi|^2 u_\varepsilon^2(x,s)dxds\\
\leq & Cr_3^d||u_\varepsilon||^2_{L^\infty\left(E_{r_3}\times (t-r_3^2,t)\right)}.
\end{aligned}\end{equation} Thus we have completed the proof of $(2.13)$ after noting the choice of $\varphi$.
\end{proof}

\section{Approximate two-sphere one-cylinder inequality}
Following \cite{guadie2014on}, we are going to apply the Lagrange interpolation method to obtain the approximate two-sphere one-cylinder inequality. Actually, the similar method in \cite{guadie2014on} has been used by the author in \cite{kenig2019propagation} to obtain the approximate three-ball inequality in elliptic periodic homogenization. First, let us briefly review the standard Lagrange interpolation method in numerical analysis. Set
\begin{equation}
\Phi_m(z)=(z-p_1)(z-p_2)\cdots(z-p_m)
\end{equation}for $z,p_j\in \mathcal{C}$ with $j=1,\cdots,m$. Let $\mathcal{D}$ ba a simply connected open domain in the complex plane
$\mathcal{C}$ that contains the nodes $\tilde{p},p_1,\cdots,p_m$. Assume that $f$ is an analytic function without poles in the closure of $\mathcal{D}$.
By well-known calculations, it holds that

\begin{equation}\frac{1}{z-\tilde{p}}=\sum_{j=1}^{m} \frac{\Phi_{j-1}(\tilde{p})}{\Phi_{j}(z)}+\frac{\Phi_{m}(\tilde{p})}{(z-\tilde{p}) \Phi_{m}(z)}.\end{equation}
Multiplying the last identify by $\frac{1}{2\pi i}f(z)$ and integrating along the boundary of $\mathcal{D}$ leads to
\begin{equation}
\frac{1}{2 \pi i} \int_{\partial \mathcal{D}} \frac{f(z)}{z-\tilde{p}} d z=\sum_{j=1}^{m} \frac{\Phi_{j-1}(\tilde{p})}{2 \pi i} \int_{\partial \mathcal{D}} \frac{f(z)}{\Phi_{j}(z)} d z+\left(R_{m} f\right)(\tilde{p}),\end{equation}
where

\begin{equation}
\left(R_{m} f\right)(\tilde{p})=\frac{1}{2 \pi i} \int_{\partial \mathcal{D}} \frac{\Phi_{m}(\tilde{p}) f(z)}{(z-\tilde{p}) \Phi_{m}(z)} d z.
\end{equation}
By the residue theorem, there holds that
\begin{equation}\begin{aligned}
\left({R}_{m} f\right)(\tilde{p}) &=\sum_{j=1}^{m} \frac{\Phi_{m}(\tilde{p})}{\left(p_{j}-\tilde{p}\right) \Phi_{m}^{\prime}\left(p_{j}\right)} f\left(p_{j}\right)+f(\tilde{p}) \\
&=-\sum_{j=1}^{m} \prod_{i \neq j}^{m} \frac{\tilde{p}-p_{i}}{p_{j}-p_{i}} f\left(p_{j}\right)+f(\tilde{p}),
\end{aligned}\end{equation}
where $\left(R_{m} f\right)(\tilde{p})$ is called the interpolation error. See chapter 4 in \cite{Bjorck2008Numerical} for more information.

In order to obtain the approximate two-sphere one-cylinder inequality for the solution in $(1.4)$, we consider the Lagrange interpolation for
$f(h)=\Gamma_0(hx_0\frac{r_1}{r_2},t_0;y,s)$, where $0<r_1<r_2<{{r_3}}/{12}<R/8$ and $(x_0,t_0)$ is a fixed point such that
$\sqrt{\langle\widehat{A}^{-1}x_0,x_0\rangle}=|Sx_0|< r_2$. In view of $(3.5)$, we need to estimate the error term $(R_m\Gamma_0)(x_0,t_0;y,s)$ of the
approximation.
Following the idea in \cite{kenig2019propagation}, we choose points $x_i=h_ix_0\frac{r_1}{r_2}$ on the segment $[0,x_0\frac{r_1}{r_2}]$ with $h_i\in (0,1)$, then $x_i\in E_{r_1}$, $i=1,\cdots,m$. Select $p_i=h_i$
in the definition of $\Phi_m$ in $(3.1)$ and $\tilde{p}=r_2/r_1$. Define
\begin{equation}
c_i=\prod_{j\neq i}^{m}\frac{r_2r_1^{-1}-h_j}{h_i-h_j}.
\end{equation}
Since $0<h_i<1$, direct computation shows that
\begin{equation}
|c_i|\leq \frac{\left(r_2r_1^{-1}\right)^{m-1}}{|\Phi_m'(h_i)|}.
\end{equation}
To estimate $|c_i|$, we choose $h_i$ to be the Chebyshev nodes, which means, $h_i=\cos\left(\frac{(2i-1)\pi}{2m}\right)$, $i=1,\cdots,m$. Then we can write
$$\Phi_m(h)=2^{1-m}T_m(h),$$ where $T_m$ is the Chebyshev polynomial of the first kind. There also holds that
\begin{equation}
\Phi_m'(h)=m2^{1-m}U_{m-1}(t),
\end{equation}
where $U_{m-1}$ is the Chebyshev polynomial of the second kind. See e.g. section 3.2.3 in \cite{Bjorck2008Numerical}. At each $h_i$, there hold

\begin{equation}
U_{m-1}(h_i)=U_{m-1}\left(\cos\left(\frac{(2i-1)\pi}{2m}\right)\right)=\frac{\sin{\frac{(2i-1)\pi}{2}}}{\sin{\frac{(2i-1)\pi}{2m}}}
=\frac{(-1)^{i-1}}{\sin\frac{(2i-1)\pi}{2m}}.
\end{equation}
According to $(3.8)$ and $(3.9)$, there holds

\begin{equation}
|\Phi_m'(h_i)|\geq m 2^{1-m}.
\end{equation}
Therefore, by $(3.7)$, we have
\begin{equation}
|c_i|\leq (m)^{-1}\left(\frac{2r_2}{r_1}\right)^{m-1} \text{ for }i=1,\cdots,m.
\end{equation}

To estimate the error term $(R_m\Gamma_0)(x_0,t_0;y,s)$, we do an analytic extension of the function $f(h)=\Gamma_0(hx_0\frac{r_1}{r_2},t_0;y,s)$
to the disc $\mathcal{D}_{\frac{{r_3}}{3r_1}}$ of radius $\frac{{r_3}}{3r_1}$ centered at the origin in the complex plane $\mathcal{C}$. According to $(2.8)$, we have
\begin{equation}
f(z)=\frac{1}{\left(2\sqrt{\pi}\right)^d}\left(t-s\right)^{-d/2}|S|\exp\left\{-\frac{(z\frac{r_1}{r_2}Sx_0-Sy)^2}{4(t-s)}\right\},
\end{equation} where $(z\frac{r_1}{r_2}Sx_0-Sy)^2=(z\frac{r_1}{r_2}Sx_0-Sy)\cdot (z\frac{r_1}{r_2}Sx_0-Sy)=\sum_{i=1}^d(z\frac{r_1}{r_2}(Sx_0)_i-(Sy)_i)^2$.
Note that $|z\frac{r_1}{r_2}Sx_0|\leq \frac{{r_3}}{3}$ in the disc $\mathcal{D}_{\frac{{r_3}}{3r_1}}$, then  there holds

\begin{equation}
|f(z)|\leq \tilde{C}\left(t-s\right)^{-d/2}\exp\left\{-\frac{Cr_3^2}{t-s}\right\}\quad \text{for } y\in E_{4{r_3}/5}\backslash E_{3{r_3}/4},
\end{equation}where $C$ and $\tilde{C}$ depend only on $d$.\\

Similarly, with the notations above, consider the Lagrange interpolation for
$g(h)=\nabla_y\Gamma_0(hx_0\frac{r_1}{r_2},t_0;y,s)$, and we do an analytic extension of the $g(h)$
to the disc $\mathcal{D}_{\frac{{r_3}}{3r_1}}$. Then according to $(2.8)$ again, there holds

\begin{equation}
g(z)=\frac{S\left(z\frac{r_1}{r_2}Sx_0-Sy\right)}{2\left(2\sqrt{\pi}\right)^d}\left(t-s\right)^{-d/2-1}|S|\exp\left\{-\frac{(z\frac{r_1}{r_2}Sx_0-Sy)^2}{4(t-s)}\right\},
\end{equation}where $S(z\frac{r_1}{r_2}Sx_0-Sy)$ is a vector with $S_{ik}\cdot(z\frac{r_1}{r_2}S_{ij}x_{0,j}-S_{ij}y_j)$ being its $k$-th position. Note that $|z\frac{r_1}{r_2}Sx_0|\leq \frac{{r_3}}{3}$ in the disc $\mathcal{D}_{\frac{{r_3}}{3r_1}}$, then we have

\begin{equation}
|g(z)|\leq \tilde{C}{r_3}\left(t-s\right)^{-d/2-1}\exp\left\{-\frac{Cr_3^2}{t-s}\right\}\quad \text{for } y\in E_{4{r_3}/5}\backslash E_{3{r_3}/4},
\end{equation}where $C$ and $\tilde{C}$ depend only on $d$.\\

The following lemma gives the interpolation error terms $(R_m(\nabla_y \Gamma_0))(x,t;y,s)$  and $(R_m\Gamma_0)(x,t;y,s)$ for $\nabla_y\Gamma_0(x,t;y,s)$ and $\Gamma_0(x,t;y,s)$, respectively.

\begin{lemma}
If $x_0\in E_{r_2}$ and $y\in E_{4{r_3}/5}\backslash E_{3{r_3}/4} $ with $0<r_1<r_2<{r_3}/12<R/8$ and $-\infty<s<t<\infty$, then there hold
\begin{equation}
|(R_m\Gamma_0)(x_0,t;y,s)|\leq \frac{\tilde{C}4^mr_2^m}{{r_3}^m}(t-s)^{-d/2}\exp\left\{-\frac{Cr_3^2}{t-s}\right\},
\end{equation}and

\begin{equation}
|(R_m(\nabla_y\Gamma_0))(x_0,t;y,s)|\leq \frac{\tilde{C}4^mr_2^m}{{r_3}^{m-1}}(t-s)^{-d/2-1}\exp\left\{-\frac{Cr_3^2}{t-s}\right\},
\end{equation}where $C$ and $\tilde{C}$ depend only on $d$.
\end{lemma}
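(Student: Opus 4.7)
The plan is to apply the contour–integral representation (3.4) for the interpolation error to the analytic extensions $f$ and $g$ of $\Gamma_0(\cdot,t_0;y,s)$ and $\nabla_y\Gamma_0(\cdot,t_0;y,s)$ along the ray through $x_0\tfrac{r_1}{r_2}$, and then estimate the three ingredients that appear in the integrand separately: the numerator $|\Phi_m(\tilde p)|$, the denominator $|(z-\tilde p)\Phi_m(z)|$, and the $\sup$ of $|f|$ (respectively $|g|$) on the contour $\partial\mathcal D_{r_3/(3r_1)}$.

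First I would bound $|\Phi_m(\tilde p)|$ with $\tilde p=r_2/r_1$: since the Chebyshev nodes $h_j$ lie in $(-1,1)$ and $r_2/r_1>1$, one has $|r_2/r_1-h_j|\le r_2/r_1$, hence $|\Phi_m(\tilde p)|\le (r_2/r_1)^m$. Next, for $z$ on the circle $|z|=r_3/(3r_1)$, the assumption $r_1<r_3/12$ gives $|z|\ge 4$, so $|z-h_j|\ge |z|-1\ge r_3/(4r_1)$, yielding the lower bound $|\Phi_m(z)|\ge (r_3/(4r_1))^m$. The same hypothesis $r_2<r_3/12$ gives $|z-\tilde p|\ge r_3/(3r_1)-r_2/r_1\ge r_3/(4r_1)$. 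Combining these with the uniform estimate (3.13) on $|f(z)|$ and parametrizing the contour (length $\sim r_3/r_1$) produces
\begin{equation*}
|(R_m\Gamma_0)(x_0,t_0;y,s)|\le C\,\Bigl(\tfrac{r_2}{r_1}\Bigr)^{m}\Bigl(\tfrac{4r_1}{r_3}\Bigr)^{m}\tilde C\,(t-s)^{-d/2}\exp\!\Bigl\{-\tfrac{Cr_3^2}{t-s}\Bigr\},
\end{equation*}
and absorbing the harmless $r_3/r_1$ length factor against $|z-\tilde p|^{-1}\le 4r_1/r_3$ leaves exactly the geometric factor $(4r_2/r_3)^m$ claimed in (3.16).

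For (3.17) the argument is identical, except that I substitute the bound (3.15) on $|g(z)|$ in place of (3.13); this introduces an extra factor $r_3(t-s)^{-1}$, giving the prefactor $r_3^{-(m-1)}$ instead of $r_3^{-m}$ and the exponent $(t-s)^{-d/2-1}$ instead of $(t-s)^{-d/2}$.

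The only real care-point is checking that the ratio $4r_2/r_3$ is strictly less than $1$, which is guaranteed by the hypothesis $r_2<r_3/12$; this is what allows the error term to decay geometrically in $m$ and is the whole reason the three radii $r_1,r_2,r_3$ enter with the particular quantitative separation assumed in the lemma. Everything else is book-keeping of constants coming from the ellipsoid geometry via $|S|$, already absorbed in the constant $\tilde C$ through (2.8).
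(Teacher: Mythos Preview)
Your argument is correct and follows essentially the same route as the paper: bound $|\Phi_m(\tilde p)|$ from above, $|\Phi_m(z)|$ and $|z-\tilde p|$ from below on the circle $|z|=r_3/(3r_1)$, plug in the sup bounds (3.13) and (3.15), and collect the geometric factor $(4r_2/r_3)^m$. One small slip: having noted that the Chebyshev nodes lie in $(-1,1)$, the inequality $|r_2/r_1-h_j|\le r_2/r_1$ fails for negative $h_j$; the correct bound is $|r_2/r_1-h_j|\le r_2/r_1+1<2r_2/r_1$, which only changes the constant (the paper sidesteps this by writing $h_i\in(0,1)$, at the cost of a matching inconsistency with the Chebyshev choice).
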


\begin{proof}
First, to see $(3.16)$. According to $(3.1)$ and noting that $p_i=h_i\in(0,1)$ with $i=1,\cdots,m$, it is easy to see that
\begin{equation}
|\Phi_m(z)|\geq \left(\left(\frac{r_3}{3r_1}\right)-1\right)^m \text{ on the circle } |z|=\frac{r_3}{3r_1}
\end{equation}and
\begin{equation}
|\Phi_m(r_2/r_1)|\leq (r_2/r_1)^m.
\end{equation}
 In view of $(3.4)$-$(3.6)$ and $(3.18)$-$(3.19)$, we have
\begin{equation}\begin{aligned}
\left|(R_m\Gamma_0)(x_0,t;y,s)\right| &=|\Gamma_0(x_0,t;y,s)-\sum_{i=1}^{m} c_{i} \Gamma_0(x_i,t;y,s)| \\
&=|f\left(r_2/r_1\right)-\sum_{i=1}^{m} c_{i} f\left(h_{i}\right)| \\
&=|\frac{1}{2 \pi i} \int_{|z|=\frac{{r_3}}{3 r_{1}}} \frac{\Phi_{m}\left(r_{2} r_{1}^{-1}\right) f(z)}{\left(z-r_{2} r_{1}^{-1}\right) \Phi_{m}(z)} d z| \\
& \leq C \frac{(r_2/r_1)^m(3r_1)^m}{({r_3}-3r_2)({r_3}-3r_1)^m}\cdot {r_3}(t-s)^{-d/2}\exp\left\{-\frac{Cr_3^2}{t-s}\right\}\\
& \leq \frac{\tilde{C}4^mr_2^m}{{r_3}^m}(t-s)^{-d/2}\exp\left\{-\frac{Cr_3^2}{t-s}\right\},
\end{aligned}\end{equation}where we have used estimate $(3.13)$, the assumption that $0<r_1<r_2<{r_3}/12<R/8$ in the last inequality, and the constants $\tilde{C}$ and $C$ in the last inequality depend on $d$.\\

Similarly, for $(R_m(\nabla_y) \Gamma_0(x,t;y,s)$, there holds
\begin{equation}\begin{aligned}
\left|(R_m(\nabla_y\Gamma_0))(x_0,t;y,s)\right| &=|\nabla_y\Gamma_0(x_0,t;y,s)-\sum_{i=1}^{m} c_{i} \nabla_y\Gamma_0(x_i,t;y,s)| \\
&=|g\left(r_2/r_1\right)-\sum_{i=1}^{m} c_{i} g\left(h_{i}\right)| \\
&=|\frac{1}{2 \pi i} \int_{|z|=\frac{{r_3}}{2 r_{1}}} \frac{\Phi_{m}\left(r_{2} r_{1}^{-1}\right) g(z)}{\left(z-r_{2} r_{1}^{-1}\right) \Phi_{m}(z)} d z| \\
& \leq \frac{\tilde{C}4^mr_2^m}{{r_3}^{m-1}}(t-s)^{-d/2-1}\exp\left\{-\frac{Cr_3^2}{t-s}\right\},
\end{aligned}\end{equation}where we have used estimate $(3.15)$ instead of $(3.13)$, compared to $(3.20)$, and the assumption that $0<r_1<r_2<{r_3}/12<R/8$ in the last inequality, and the constants $\tilde{C}$ and $C$ in the last inequality depend on $d$. Thus we have completed the proof of Lemma 3.1.
\end{proof}
To continue the proof of Theorem 1.1, since $u_\varepsilon$ satisfies
\begin{equation}
\partial_t u_\varepsilon-\operatorname{div}\left(A_\varepsilon \nabla u_\varepsilon\right)=0 \quad \text{in }B_{{R}}\times (-T,T),
\end{equation}then simple computation shows that
\begin{equation}\begin{aligned}
&\partial_t \left(\eta u_\varepsilon\right)-\operatorname{div}\left(A_\varepsilon \nabla \left(\eta u_\varepsilon\right)\right)\\
&=-\operatorname{div}\left[\left( A_\varepsilon\cdot \nabla \eta\right) u_\varepsilon \right]-A_\varepsilon \nabla u_\varepsilon \nabla \eta +u_\varepsilon \partial_t \eta\\
&=:\tilde{f}(x,t)\quad\quad \text{ in }\mathbb{R}^d\times (-\infty,T),
\end{aligned}\end{equation}where $\eta\in[0,1]$ is a cut-off function such that $\eta=\eta(x,t)=1$ if $(x,t)\in E_{3{r_3}/4} \times (t_0-r_3^2/2,t_0)$, and $\eta=0$ if $(x,t)\notin
E_{4{r_3}/5} \times \{(t_0-3r_3^2/4,t_0+r_3^2/4)$ for some fixed $t_0$ with $|\nabla \eta|\leq C/{r_3}$ and $|\partial_t \eta|\leq C/r_3^2$. Then
\begin{equation}\begin{aligned}
\left(\eta u_\varepsilon\right)(x_0,t_0)&=\int_{t_0-r_3^2}^{t_0}\int_{\mathbb{R}^d}\Gamma_\varepsilon(x_0,t_0;y,s)\tilde{f}(y,s)dyds\\
&=\int_{t_0-r_3^2}^{t_0}\int_{\mathbb{R}^d}\nabla_y\Gamma_\varepsilon(x_0,t_0;y,s)\left( A_\varepsilon\cdot \nabla \eta\right) u_\varepsilon dyds \\
&\quad+\int_{t_0-r_3^2}^{t_0}\int_{\mathbb{R}^d}\Gamma_\varepsilon(x_0,t_0;y,s)\left(u_\varepsilon \partial_s \eta- A_\varepsilon \nabla u_\varepsilon \nabla \eta\right)dyds\\
&=:I_1+I_2,
\end{aligned}\end{equation}where $x_0\in E_{r_2} $ is a fixed point.

The summation convention that repeated indices are summed is used in the rest of this section.

It is easy to see that
\begin{equation}\begin{aligned}
I_1=&\int_{t_0-r_3^2}^{t_0}\int_{\mathbb{R}^d}c_i\nabla_y\Gamma_\varepsilon(x_i,t_0;y,s)(A_\varepsilon\cdot \nabla \eta) u_\varepsilon dyds\\
&+\int_{t_0-r_3^2}^{t_0}\int_{\mathbb{R}^d}\left[\nabla_y\Gamma_\varepsilon(x_0,t_0;y,s)-\left(I+\nabla
\widetilde{\chi}_\varepsilon\right)\nabla_y\Gamma_0(x_0,t_0;y,s)\right]\left( A_\varepsilon\cdot \nabla \eta\right) u_\varepsilon dyds\\
&+\int_{t_0-r_3^2}^{t_0}\int_{\mathbb{R}^d}\left(I+\nabla\widetilde{\chi}_\varepsilon\right)\left[\nabla_y\Gamma_0(x_0,t_0;y,s)
-c_i\nabla_y\Gamma_0(x_i,t_0;y,s)\right]\left( A_\varepsilon\cdot \nabla \eta\right) u_\varepsilon dyds\\
&+\int_{t_0-r_3^2}^{t_0}\int_{\mathbb{R}^d}c_i\left[\left(I+\nabla\widetilde{\chi}_\varepsilon\right)\nabla_y\Gamma_0(x_i,t_0;y,s)
-\nabla_y\Gamma_\varepsilon(x_i,t_0;y,s)\right]\left( A_\varepsilon\cdot \nabla \eta\right) u_\varepsilon dyds\\
=&:M_0+M_1+M_2+M_3,
\end{aligned}\end{equation} where $c_i$ are defined in $(3.6)$ and $x_i=h_ix_0\frac{r_1}{r_2}$ on the segment $[0,x_0\frac{r_1}{r_2}]$ with $h_i\in (0,1)$, $i=1,\cdots,m$ and $\nabla \widetilde{\chi}_\varepsilon=\nabla \widetilde{\chi}\left(y / \varepsilon,-s / \varepsilon^{2}\right)$. Moreover, it is easy to see that $x_i\in E_{r_1} $, $i=1,\cdots,m$. Similarly, we have
\begin{equation}\begin{aligned}
I_2=&\int_{t_0-r_3^2}^{t_0}\int_{\mathbb{R}^d}c_i\Gamma_\varepsilon(x_i,t_0;y,s)\left(u_\varepsilon \partial_s \eta- A_\varepsilon \nabla u_\varepsilon \nabla \eta\right)dyds\\
&+\int_{t_0-r_3^2}^{t_0}\int_{\mathbb{R}^d}\left[\Gamma_\varepsilon(x_0,t_0;y,s)-\Gamma_0(x_0,t_0;y,s)\right]\left(u_\varepsilon \partial_s \eta- A_\varepsilon \nabla u_\varepsilon \nabla \eta\right) dyds\\
&+\int_{t_0-r_3^2}^{t_0}\int_{\mathbb{R}^d}\left[\Gamma_0(x_0,t_0;y,s)-c_i\Gamma_0(x_i,t_0;y,s)\right]\left(u_\varepsilon \partial_s \eta- A_\varepsilon \nabla u_\varepsilon \nabla \eta\right) dyds\\
&+\int_{t_0-r_3^2}^{t_0}\int_{\mathbb{R}^d}c_i\left[\Gamma_0(x_i,t_0;y,s)-\Gamma_\varepsilon(x_i,t_0;y,s)\right]\left(u_\varepsilon \partial_s \eta- A_\varepsilon \nabla u_\varepsilon \nabla \eta\right) dyds\\
=&\tilde{M}_0+\sum_{i=4}^{i=9}M_i,
\end{aligned}\end{equation} with the same $c_i$, $x_i$ and $x_0$ as in $(3.25)$.  Clearly, it follows from the representation formula $(3.24)$ that
\begin{equation}M_0+\tilde{M}_0=c_i\left(\eta u_\varepsilon\right)(x_i,t_0).\end{equation}

Before we continue, we give some notations first. Denote $\tilde{E}$ and $\tilde{\Omega}_{r_3,t_0}$ by $E_{4r_3/5} \setminus E_{3r_3/4}$ and $E_{r_3} \times (t_0-r_3^2,t_0)$, respectively. Next, we need to estimate $M_1$-$M_9$ term by term. In view of $(3.25)$, we have

\begin{equation}\begin{aligned}
|M_1|&\leq C\int_{t_0-r_3^2}^{t_0}\int_{\mathbb{R}^d}\left|\nabla_y\Gamma_\varepsilon(x_0,t_0;y,s)
-\left(I+\nabla\widetilde{\chi}_\varepsilon\right)\nabla_y\Gamma_0(x_0,t_0;y,s)\right||\nabla \eta|| u_\varepsilon| dyds\\
&\leq C\varepsilon\int_{t_0-r_3^2}^{t_0}\int_{\mathbb{R}^d}\frac{\log \left(2+\varepsilon^{-1}|t_0-s|^{1 / 2}\right)}{(t_0-s)^{\frac{d+2}{2}}}  \exp \left\{-\frac{\kappa|x_0-y|^{2}}{t_0-s}\right\}|\nabla \eta|| u_\varepsilon| dyds\\
&\leq C\varepsilon\int_{t_0-r_3^2}^{t_0}\int_{\mathbb{R}^d}\frac{\log \left(2+\varepsilon^{-1}|t_0-s|^{1 / 2}\right)}{(t_0-s)^{\frac{d+2}{2}}}  \exp \left\{-\frac{C|Sx_0-Sy|^{2}}{t_0-s}\right\}|\nabla \eta|| u_\varepsilon| dyds\\
&\leq \frac{C\varepsilon}{r_3}\int_{t_0-r_3^2}^{t_0}\int_{\tilde{E}}\frac{\log \left(2+\varepsilon^{-1}|t_0-s|^{1 / 2}\right)}{(t_0-s)^{\frac{d+2}{2}}}  \exp \left\{-\frac{C|Sx_0-Sy|^{2}}{t_0-s}\right\}| u_\varepsilon| dyds\\
&\leq C\varepsilon r_3^{d-1}\int_{t_0-r_3^2}^{t_0}\frac{\log \left(2+\varepsilon^{-1}|t_0-s|^{1 / 2}\right)}{(t_0-s)^{\frac{d+2}{2}}}  \exp \left\{-\frac{Cr_3^2}{t_0-s}\right\}ds\cdot ||u_\varepsilon||_{L^\infty(\tilde{\Omega}_{r_3,t_0})}\\
&\leq C\varepsilon r_3^{-1}\int_1^\infty\log (2+\varepsilon^{-1}\tilde{s}^{-1/2}r_3){\tilde{s}}^{d/2-1}\exp \{-C\tilde{s}\}d\tilde{s}\cdot||u_\varepsilon||_{L^\infty(\tilde{\Omega}_{r_3,t_0})}\\
&\leq C\varepsilon r_3^{-1}\log(2+\varepsilon^{-1}r_3)||u_\varepsilon||_{L^\infty(\tilde{\Omega}_{r_3,t_0})},
\end{aligned}\end{equation}where we have used $(2.11)$ in Lemma 2.2 in the above inequality.

To estimate $M_2$, we first note that $\nabla \tilde{\chi}_\varepsilon$ is bounded, then according to Lemma 3.1, there holds
\begin{equation}\begin{aligned}
|M_2|&\leq\frac{C(4r_2)^m}{r_3^{m}}\int_{t_0-r_3^2}^{t_0}\int_{\tilde{E}}(t_0-s)^{-d/2-1}\exp\left\{-\frac{Cr_3^2}{t_0-s}\right\}dyds\cdot ||u_\varepsilon||_{L^\infty(\tilde{\Omega}_{r_3,t_0})}\\
&\leq \frac{C(4r_2)^m}{r_3^{m-d}}\int_{t_0-r_3^2}^{t_0}(t_0-s)^{-d/2-1}\exp\left\{-\frac{Cr_3^2}{t_0-s}\right\}ds\cdot ||u_\varepsilon||_{L^\infty(\tilde{\Omega}_{r_3,t_0})}\\
&\leq \frac{C(4r_2)^m}{r_3^{m}}\int_{1}^{\infty}{\tilde{s}}^{d/2-1}\exp\left\{-C\tilde{s}\right\}ds\cdot ||u_\varepsilon||_{L^\infty(\tilde{\Omega}_{r_3,t_0})}\\
&\leq \frac{C(4r_2)^m}{r_3^{m}}||u_\varepsilon||_{L^\infty(\tilde{\Omega}_{r_3,t_0})}.
\end{aligned}\end{equation}

As for $M_3$, noting that $x_i\in E_{r_1}$ with $i=1,\cdots,m$, then the estimate $(2.11)$ and $(3.11)$ yields that
\begin{equation}\begin{aligned}
|M_3|&\leq \frac{C\varepsilon}{r_3}\int_{t_0-r_3^2}^{t_0}\int_{\tilde{E}}\frac{\log \left(2+\varepsilon^{-1}|t_0-s|^{1 / 2}\right)}{(t_0-s)^{\frac{d+2}{2}}}|c_i| \exp \left\{-\frac{\kappa|x_i-y|^{2}}{t_0-s}\right\}dyds\cdot||u_\varepsilon||_{L^\infty(\tilde{\Omega}_{r_3,t_0})}\\
&\leq \frac{C\varepsilon}{r_3}\int_{t_0-r_3^2}^{t_0}\int_{\tilde{E}}\frac{\log \left(2+\varepsilon^{-1}|t_0-s|^{1 / 2}\right)}{(t_0-s)^{\frac{d+2}{2}}}|c_i| \exp \left\{-\frac{C|Sx_i-Sy|^{2}}{t_0-s}\right\}dyds\cdot||u_\varepsilon||_{L^\infty(\tilde{\Omega}_{r_3,t_0})}\\
&\leq C\varepsilon r_3^{d-1}\sum_{i}|c_i|\int_{t_0-r_3^2}^{t_0}\frac{\log \left(2+\varepsilon^{-1}|t_0-s|^{1 / 2}\right)}{(t_0-s)^{\frac{d+2}{2}}}\exp \left\{-\frac{Cr_3^2}{t_0-s}\right\}dyds\cdot||u_\varepsilon||_{L^\infty(\tilde{\Omega}_{r_3,t_0})}\\
&\leq \frac{C\varepsilon(2r_2)^{m-1}}{r_1^{m-1}r_3}\int_1^\infty\log (2+\varepsilon^{-1}\tilde{s}^{-1/2}r_3){\tilde{s}}^{d/2-1}\exp \{-C\tilde{s}\}d\tilde{s}\cdot||u_\varepsilon||_{L^\infty(\tilde{\Omega}_{r_3,t_0})}\\
&\leq \frac{C(2r_2)^{m-1}}{r_1^{m-1}}\varepsilon {r_3}^{-1}\log (2+\varepsilon^{-1}r_3)||u_\varepsilon||_{L^\infty(\tilde{\Omega}_{r_3,t_0})}.
\end{aligned}\end{equation}

Next, we give the estimate  of $I_2$ term by term  in $(3.26)$. In view of $(2.9)$, then we have
\begin{equation}\begin{aligned}
|M_4|&\leq C\varepsilon r_3^{d-2}\int_{t_0-r_3^2}^{t_0}(t_0-s)^{-\frac{d+1}{2}} \exp \left\{-\frac{Cr_3^2}{t_0-s}\right\}ds\cdot ||u_\varepsilon||_{L^\infty(\tilde{\Omega}_{r_3,t_0})}\\
&\leq C\varepsilon r_3^{d-2} \int_1^\infty {\tilde{s}}^{\frac{d-3}{2}}\exp (-C\tilde{s}) r_3^{-d+1}d\tilde{s}\cdot ||u_\varepsilon||_{L^\infty(\tilde{\Omega}_{r_3,t_0})}\\
&\leq C\varepsilon r_3^{-1}||u_\varepsilon||_{L^\infty(\tilde{\Omega}_{r_3,t_0})}.
\end{aligned}\end{equation}

As for $M_5$, we have
\begin{equation}\begin{aligned}
|M_5|&\leq C\left(\int_{t_0-r_3^2}^{t_0}\int_{\mathbb{R}^d}\left|\Gamma_\varepsilon(x_0,t_0;y,s)-\Gamma_0(x_0,t_0;y,s)\right|^2|\nabla \eta|dyds\right)^{1/2}\\
&\quad \times\left(\int_{t_0-r_3^2}^{t_0}\int_{\mathbb{R}^d}\left| \nabla u_\varepsilon \right|^2|\nabla \eta| dyds\right)^{1/2}\\
&\leq C\varepsilon r_3^{d/2-1}\left(\int_{t_0-r_3^2}^{t_0} (t_0-s)^{-d-1}\exp \left\{-\frac{Cr_3^2}{t_0-s}\right\}ds\right)^{1/2}\left(\int_{t_0-r_3^2}^{t_0}\int_{\tilde{E}} |\nabla u_\varepsilon|^2dyds\right)^{1/2}\\
&\leq C\varepsilon r_3^{-1}||u_\varepsilon||_{L^\infty(\tilde{\Omega}_{r_3,t_0})},
\end{aligned}\end{equation}where we have used $(2.9)$ in the second inequality and $(2.13)$ in the third inequality.

Due to $(3.16)$ and similar to the estimate of $M_2$, we have
\begin{equation}\begin{aligned}
|M_6|&\leq\frac{\tilde{C}(4r_2)^m}{r_3^{m-d+2}}\int_{t_0-r_3^2}^{t_0}(t_0-s)^{-d/2}\exp\left\{-\frac{Cr_3^2}{t_0-s}\right\}ds
\cdot||u_\varepsilon||_{L^\infty(\tilde{\Omega}_{r_3,t_0})}\\
&\leq \frac{C(4r_2)^m}{r_3^{m}}||u_\varepsilon||_{L^\infty(\tilde{\Omega}_{r_3,t_0})}.
\end{aligned}\end{equation}According to $(3.16)$ and $(2.13)$, then there holds

\begin{equation}\begin{aligned}
|M_7|&\leq C\left(\int_{t_0-r_3^2}^{t_0}\int_{\mathbb{R}^d}\left|\Gamma_0(x_0,t_0;y,s)-c_i\Gamma_0(x_i,t_0;y,s)\right|^2|\nabla \eta|dyds\right)^{1/2}\\
&\quad \times\left(\int_{t_0-r_3^2}^{t_0}\int_{\mathbb{R}^d}\left| \nabla u_\varepsilon \right|^2|\nabla \eta| dyds\right)^{1/2}\\
&\leq \frac{\tilde{C}(4r_2)^m}{r_3^{m+1-d}}\left(\int_{t_0-r_3^2}^{t_0}(t_0-s)^{-d}\exp\left\{-\frac{Cr_3^2}{t_0-s}\right\}ds\right)^{1/2}\cdot ||u_\varepsilon||_{L^\infty(\tilde{\Omega}_{r_3,t_0})}\\
&\leq\frac{C(4r_2)^m}{r_3^{m}}||u_\varepsilon||_{L^\infty(\tilde{\Omega}_{r_3,t_0})}.
\end{aligned}\end{equation}
Similarly, in view of $(2.9)$ and $(3.11)$, we have
\begin{equation}\begin{aligned}
|M_8|&\leq C\varepsilon r_3^{d-2}\sum_i|c_i|\int_{t_0-r_3^2}^{t_0}(t_0-s)^{-\frac{d+1}{2}} \exp \left\{-\frac{Cr_3^2}{t_0-s}\right\}ds
\cdot||u_\varepsilon||_{L^\infty(\tilde{\Omega}_{r_3,t_0})}\\
&\leq \frac{C\varepsilon (2r_2)^{m-1}}{r_3r_1^{m-1}}||u_\varepsilon||_{L^\infty(\tilde{\Omega}_{r_3,t_0})}.
\end{aligned}\end{equation}
Moreover, similar to the proof of $M_5$ and $(3.11)$, we have
\begin{equation}
|M_9|\leq \frac{C\varepsilon (2r_2)^{m-1}}{r_3r_1^{m-1}}||u_\varepsilon||_{L^\infty(\tilde{\Omega}_{r_3,t_0})}.
\end{equation}

Consequently, noting that $\sum_{i}|c_i|\leq (2r_2)^{m-1}/{r_1}^{m-1}$, then combining $(3.24)$-$(3.36)$ yields that
\begin{equation}\begin{aligned}
|u_\varepsilon(x_0,t_0)|\leq& \frac{(2r_2)^{m-1}}{{r_1}^{m-1}}\sup_{E_{r_1}}|u_\varepsilon(\cdot,t_0)|+\frac{C(4r_2)^m}{r_3^{m}}||u_\varepsilon||_{L^\infty(\tilde{\Omega}_{r_3,t_0})}\\
&+\frac{C(2r_2)^m}{r_1^m}\varepsilon r_3^{-1}\log (2+\varepsilon^{-1}r_3)||u_\varepsilon||_{L^\infty(\tilde{\Omega}_{r_3,t_0})},
\end{aligned}\end{equation}where $C$ does not depend on $m$, $r_1$, $r_2$ or $r_3$. Note that we choose the coefficient of the third term in the RHS of $(3.37)$ is $\frac{(2r_2)^m}{r_1^m}$ instead of $\frac{(2r_2)^{m-1}}{r_1^{m-1}}$, which can be done due to $0<r_1<r_2$, and that will simply the computation when minimizing the summation (of course, one could use $\frac{(2r_2)^{m-1}}{r_1^{m-1}}$ to obtain a more accurate conclusion). Since $x_0\in E_{r_2}$ is an arbitrary point, then it follows that

\begin{equation}\begin{aligned}
\sup_{E_{r_2}}|u_\varepsilon(\cdot,t_0)|\leq& C\left\{ \frac{(2r_2)^{m-1}}{{r_1}^{m-1}}\sup_{E_{r_1}}|u_\varepsilon(\cdot,t_0)|+\frac{(4r_2)^m}{r_3^{m}}\sup_{\tilde{\Omega}_{r_3,t_0}}|u_\varepsilon|\right.\\
&\quad\quad\left.+\frac{(2r_2)^m}{r_1^m}\varepsilon r_3^{-1}\log (2+\varepsilon^{-1}r_3)\sup_{\tilde{\Omega}_{r_3,t_0}}|u_\varepsilon|\right\}.
\end{aligned}\end{equation}

Now we need to minimize the summation of the terms in the RHS of $(3.38)$ by choosing the suitable integer value $m$. Actually, the similar proof can be found in \cite{kenig2019propagation}, we give it just for completeness. For simplicity, let
\begin{equation}
\sup_{E_{r_1}}|u_\varepsilon(\cdot,t_0)|=\delta,\quad \sup_{\tilde{\Omega}_{r_3,t_0}}|u_\varepsilon|=N.
\end{equation}First, choose $m$ such that
\begin{equation}
\delta\left(\frac{2r_2}{r_1}\right)^m=N\left(\frac{4r_2}{r_3}\right)^m,
\end{equation}which gives $$m=\frac{\log ({N/\delta})}{\log [{r_3/(2r_1)}]}.$$
Consequently, define
\begin{equation}
m_0= \left\lfloor\frac{\log ({N/\delta})}{\log [{r_3/(2r_1)}]}\right\rfloor+1,
\end{equation}where $\lfloor\cdot\rfloor$ denotes its integer part. We minimize the above terms by considering two cases.\\

\textbf{Case 1}. $\varepsilon r_3^{-1}\log (2+\varepsilon^{-1}r_3)\left(\frac{2r_2}{r_1}\right)^{m_0}\leq \left(\frac{4r_2}{r_3}\right)^{m_0}$.

In this case, let $m=m_0$ in $(3.38)$. Then the third term can be absorbed into the second one in the right hand side of $(3.38)$. Consequently, since $0<r_1<r_2<r_3/12$ and $\frac{\log ({N/\delta})}{\log [{r_3/(2r_1)}]}\leq m_0\leq \frac{\log ({N/\delta})}{\log [{r_3/(2r_1)}]}+1$,
it follows that
\begin{equation}\begin{aligned}
\sup _{E_{r_{2}}}\left|u_{\varepsilon}\right| & \leq C\left\{\delta\left(\frac{2 r_{2}}{r_{1}}\right)^{m_{0}-1}+N\left(\frac{4 r_{2}}{r_3}\right)^{m_{0}} \right\} \\
&\leq C\left\{\delta\left(\frac{2 r_{2}}{r_{1}}\right)^{\frac{\log ({N/\delta})}{\log [{r_3/(2r_1)}]}}+N\left(\frac{4 r_{2}}{r_3}\right)^{\frac{\log ({N/\delta})}{\log [{r_3/(2r_1)}]}} \right\}\\
& \leq C N^{1-\alpha} \delta^{\alpha},
\end{aligned}\end{equation}where
\begin{equation}
\alpha=\frac{\log {\frac{r_3}{4r_2}}}{\log\frac{r_3}{2r_1}}.
\end{equation}

\textbf{Case 2.} $\varepsilon r_3^{-1}\log (2+\varepsilon^{-1}r_3)\left(\frac{2r_2}{r_1}\right)^{m_0}> \left(\frac{4r_2}{r_3}\right)^{m_0}$.

In this case, from the definition of $m_0$, there holds that
\begin{equation}
\varepsilon r_3^{-1}\log (2+\varepsilon^{-1}r_3)>\frac{2\delta r_1}{Nr_3}.
\end{equation}That is,

\begin{equation}
\sup_{E_{r_1}}|u_\varepsilon(\cdot,t_0)|\leq \varepsilon r_3^{-1}\log (2+\varepsilon^{-1}r_3)\frac{r_3}{2r_1}\sup_{\tilde{\Omega}_{r_3,t_0}}|u_\varepsilon|.
\end{equation}Then, we choose $\widehat{m}$ such that

\begin{equation}\varepsilon r_3^{-1}\log (2+\varepsilon^{-1}r_3)\left(\frac{2r_2}{r_1}\right)^{\widehat{m}}=\left(\frac{4r_2}{r_3}\right)^{\widehat{m}},\end{equation}
which gives
$$\widehat{m}=\frac{\log[{\varepsilon r_3^{-1}\log (2+\varepsilon^{-1}r_3)}]}{\log \frac{2r_1}{r_3}}.$$
Therefore, we can choose
\begin{equation}
m_1=\left\lfloor\frac{\log[{\varepsilon r_3^{-1}\log (2+\varepsilon^{-1}r_3)}]}{\log \frac{2r_1}{r_3}}\right\rfloor+1.
\end{equation}Taking $m=m_1$ in $(3.38)$, then the second term can be absorbed into the third term in the RHS of $(3.38)$. In view of $(3.45)$, and noting
 $0<r_1<r_2<r_3/12$ and $\frac{\log[{\varepsilon r_3^{-1}\log (2+\varepsilon^{-1}r_3)}]}{\log \frac{2r_1}{r_3}}\leq m_1\leq \frac{\log[{\varepsilon r_3^{-1}\log (2+\varepsilon^{-1}r_3)}]}{\log \frac{2r_1}{r_3}}+1$, then we have

\begin{equation}\begin{aligned}
&\quad\sup_{E_{r_2}}|u_\varepsilon(x,t_0)|\\
 & \leq C\left\{\left(\frac{2 r_{2}}{r_{1}}\right)^{m_{1}-1} \varepsilon r_3^{-1}\log (2+\varepsilon^{-1}r_3) \frac{r_3}{r_{1}}N+\varepsilon r_3^{-1}\log (2+\varepsilon^{-1}r_3)\left(\frac{2 r_{2}}{r_{1}}\right)^{m_{1}} N\right\} \\
& \leq C \frac{ r_3}{r_{1}} \exp \left\{\frac{\log \frac{2 r_{2}}{r_{1}}\cdot \log \left[\varepsilon r_3^{-1}\log (2+\varepsilon^{-1}r_3)\right]}{\log \frac{2r_{1}}{r_3}}\right\} \varepsilon r_3^{-1}\log (2+\varepsilon^{-1}r_3) N \\
& \leq C \frac{r_3}{r_{1}}\left[\varepsilon r_3^{-1}\log (2+\varepsilon^{-1}r_3)\right]^{1+\frac{\log\frac{2r_2}{r_1}}{\log\frac{2r_1}{r_3}}}N \\
&=C \frac{r_3}{r_{1}}\left[\varepsilon r_3^{-1}\log (2+\varepsilon^{-1}r_3)\right]^{\alpha} N,
\end{aligned}\end{equation}
where
\begin{equation}
\alpha=\frac{\log {\frac{r_3}{4r_2}}}{\log\frac{r_3}{2r_1}}.
\end{equation}Notice that $0<\alpha<1$ according to the assumption of $r_1,r_2$ and $r_3$. Consequently, combining the two cases above yields the result of Theorem 1.1.  And Corollary 1.2 directly follows from Theorem 1.1 and the estimate $(2.7)$.
\section{Parabolic equation with potential in homogenization}
In this section, we give the proof of Theorem 1.4.
Denote $\mathcal{M}(V)=\int_{Z} V(z)dz$, and $v_\varepsilon =e^{\mathcal{M}(V)t}u_\varepsilon$. Then it is easy to see that $v_\varepsilon$ satisfying
\begin{equation}
\partial_t v_\varepsilon-\operatorname{div}\left(A(x/\varepsilon,t/\varepsilon^2)\nabla v_\varepsilon\right)=(\mathcal{M}(V)-V(x/\varepsilon))v_\varepsilon.
\end{equation}
Then simple computation shows that
\begin{equation}\begin{aligned}
&\partial_t \left(\eta v_\varepsilon\right)-\operatorname{div}\left(A_\varepsilon \nabla \left(\eta v_\varepsilon\right)\right)\\
&=-\operatorname{div}\left[\left( A_\varepsilon\cdot \nabla \eta\right) v_\varepsilon \right]-A_\varepsilon \nabla v_\varepsilon \nabla \eta +u_\varepsilon \partial_t \eta+(\mathcal{M}(V)-V(x/\varepsilon))v_\varepsilon \eta\\
&=:\tilde{g}(x,t)\quad\quad \text{ in }\mathbb{R}^d\times (-\infty,T),
\end{aligned}\end{equation}
where $\eta\in[0,1]$ is a cut-off function such that $\eta=\eta(x,t)=1$ if $(x,t)\in E_{3{r_3}/4} \times (t_0-r_3^2/2,t_0)$, and $\eta=0$ if $(x,t)\notin
E_{4{r_3}/5} \times (t_0-3r_3^2/4,t_0+r_3^2/4)$ for some fixed $t_0$ with $|\nabla \eta|\leq C/{r_3}$ and $|\partial_t
\eta|\leq C/r_3^2$.
Then
\begin{equation}\begin{aligned}
\left(\eta v_\varepsilon\right)(x_0,t_0)&=\int_{t_0-r_3^2}^{t_0}\int_{\mathbb{R}^d}\Gamma_\varepsilon(x_0,t_0;y,s)\tilde{g}(y,s)dyds\\
&=\int_{t_0-r_3^2}^{t_0}\int_{\mathbb{R}^d}\nabla_y\Gamma_\varepsilon(x_0,t_0;y,s)\left( A_\varepsilon\cdot \nabla \eta\right) v_\varepsilon dyds \\
&\quad+\int_{t_0-r_3^2}^{t_0}\int_{\mathbb{R}^d}\Gamma_\varepsilon(x_0,t_0;y,s)\left(u_\varepsilon \partial_s \eta- A_\varepsilon \nabla u_\varepsilon \nabla \eta\right)dyds\\
&\quad +\int_{t_0-r_3^2}^{t_0}\int_{\mathbb{R}^d}\Gamma_\varepsilon(x_0,t_0;y,s)(\mathcal{M}(V)-V(y/\varepsilon))v_\varepsilon \eta dyds\\
\end{aligned}\end{equation}where $x_0\in E_{r_2} $ is a fixed point. Noting that $\text{supp}((\mathcal{M}(V)-V(y/\varepsilon))v_\varepsilon \eta)\subset E_{r_3}\times (t_0-r_3^2,t_0+r_3^2)$ and in view the proof of $(3.16)$, we can't apply Lemma 3.1 to the last term on the RHS of $(4.3)$ to
estimate the following term
\begin{equation*}
\int_{t_0-r_3^2}^{t_0}\int_{\mathbb{R}^d}(\Gamma_0(x_0,t_0;y,s)-c_i\Gamma_0(x_i,t_0;y,s))(\mathcal{M}(V)-V(y/\varepsilon))v_\varepsilon \eta dyds.
\end{equation*}
However, thanks to the term $\mathcal{M}(V)-V(y/\varepsilon)$, which will give us the term $O(\varepsilon)$ after integrating by parts. In view of $(3.25)$-$(3.27)$, we need only to estimate the following term
\begin{equation}
I_3=:\int_{t_0-r_3^2}^{t_0}\int_{\mathbb{R}^d}(\Gamma_\varepsilon(x_0,t_0;y,s)-c_i\Gamma_\varepsilon(x_i,t_0;y,s))(\mathcal{M}(V)-V(y/\varepsilon))v_\varepsilon \eta dyds.
\end{equation}
Let $\psi(z)\in W^{2,\frac{d+2}{2}}(Z)$ and be 1-periodic, solving the following equation
\begin{equation}
\Delta_z\psi(z)=\mathcal{M}(V)-V(z) \text{ in $Z=[0,1)^d$, with }\int_Z\psi(z)dz=0.
\end{equation}
Then we have
\begin{equation}\begin{aligned}
I_3=&\varepsilon^2\int_{t_0-r_3^2}^{t_0}\int_{\mathbb{R}^d}\Delta_y \psi(y/\varepsilon)(\Gamma_\varepsilon(x_0,t_0;y,s)-c_i\Gamma_\varepsilon(x_i,t_0;y,s))v_\varepsilon \eta dyds\\
=&-\varepsilon\int_{t_0-r_3^2}^{t_0}\int_{\mathbb{R}^d}\nabla_z \psi(y/\varepsilon)\nabla_y(\Gamma_\varepsilon(x_0,t_0;y,s)-c_i\Gamma_\varepsilon(x_i,t_0;y,s))v_\varepsilon \eta dyds\\
&-\varepsilon\int_{t_0-r_3^2}^{t_0}\int_{\mathbb{R}^d}\nabla_z \psi(y/\varepsilon)(\Gamma_\varepsilon(x_0,t_0;y,s)-c_i\Gamma_\varepsilon(x_i,t_0;y,s))\nabla_y v_\varepsilon \eta dyds\\
&-\varepsilon\int_{t_0-r_3^2}^{t_0}\int_{\mathbb{R}^d}\nabla_z \psi(y/\varepsilon)(\Gamma_\varepsilon(x_0,t_0;y,s)-c_i\Gamma_\varepsilon(x_i,t_0;y,s))v_\varepsilon \nabla_y\eta dyds\\
=&I_4+I_5+I_6.
\end{aligned}\end{equation}
Similar to the estimate of $M_2$ in $(3.29)$, the term $I_6$ is easy to handle which we omit it here. In view of the definition of $\varphi$, $(3.11)$, and $0<r_1<r_2<r_3/12$, with $r_3\geq \varepsilon$, we have
\begin{equation}\begin{aligned}
|I_4|
\leq & C\varepsilon\int_{t_0-r_3^2}^{t_0}\int_{\mathbb{R}^d}|\nabla_z \psi^\varepsilon|\left(|\nabla_y(\Gamma_\varepsilon(x_0,t_0;y,s)|-|c_i\Gamma_\varepsilon(x_i,t_0;y,s)|\right)|v_\varepsilon| \eta dyds\\
\leq& C\varepsilon \sum_i|{c_i}|\sup_{\tilde{\Omega}_{r_3,t_0}}|v_\varepsilon|\int_{t_0-r_3^2}^{t_0}\int_{E_{r_3}}|\nabla_z \psi^\varepsilon|{(t_0-s)^{-\frac{1+d}{2}}}\exp\left\{-\frac{C|y|^2}{t_0-s}\right\} dyds\\
\leq&C\varepsilon \sum_i|{c_i}|||\nabla_z \psi^\varepsilon||_{L^{p_1'}(\tilde{\Omega}_{{r_3},t_0})}\sup_{\tilde{\Omega}_{r_3,t_0}}|v_\varepsilon|\\
&\quad\times\left(\int_{t_0-r_3^2}^{t_0}\int_{E_{\frac{r_3}{t_0-s}}}{(t-s)^{\frac{d}{2}(1-p_1)-\frac{p_1}{2}}}\exp\left\{-C|y|^2\right\} dyds\right)^{1/p_1}\\
\leq& C \varepsilon r_3 (\frac{2r_2}{r_1})^{m-1}\sup_{\tilde{\Omega}_{r_3,t_0}}|v_\varepsilon|,
\end{aligned}\end{equation}
with $1<p_1<\frac{d+1}{d}$ close to $\frac{d+1}{d}$ and $\psi^\varepsilon=\psi(y/\varepsilon)$,
where we have used $x_0-y\in E_{r_3}$ and $x_i-y\in E_{r_3}$ if $y\in E_{4r_3/5}$, $x_0\in E_{r_2}$ and $x_i\in E_{r_1}$ for $i=1,\cdots,m$, and  the size estimates $|\nabla_y \Gamma_\varepsilon(x,t;y,s)|\leq C(t-s)^{-\frac{d+1}{2}}\exp\left\{-\frac{\kappa|x-y|^2}{t-s}\right\}$ if $a_{ij}$ is H\"{o}lder continuous \cite{geng2020asymptotic}, as well as  the following inequality
\begin{equation}\begin{aligned}
\left(\int_{E_{r_3}} |\nabla_z \psi^\varepsilon|^{p'_1}dz\right)^{1/p'_1}&=\varepsilon^{d/p'_1}\left(\int_{E_{r_3/\varepsilon}} |\nabla_z \psi(z)|^{p'_1}dz\right)^{1/p'_1}\\
&\leq C r_3^{d/p'_1}\left(\int_Z |\nabla_z \psi(z)|^{p'_1}dz\right)^{1/p'_1}\\
&\leq C_{p_1}r_3^{d/p'_1}||\psi||_{W^{2,\frac{d+2}{2}}(Z)}\\
&\leq C_{p_1}r_3^{d/p'_1}||V||_{L^{\frac{d+2}{2}}(Z)},
\end{aligned}\end{equation}
where we have used the Sobolev embedding (since $p_1'\in (d+1,\infty)$ close to $d+1$, and then $p_1'< \frac{d\frac{d+2}{2}}{d-\frac{d+2}{2}}$), as well as $\varepsilon\leq r_3$ in the above inequality.
Similarly, in view of $(4.6)$ and $(3.11)$, there holds
\begin{equation}\begin{aligned}
&|I_5|\\
\leq& C\varepsilon\int_{t_0-r_3^2}^{t_0}\int_{\mathbb{R}^d}|\nabla_z
\psi^\varepsilon|\left(|\Gamma_\varepsilon(x_0,t_0;y,s)|+|c_i\Gamma_\varepsilon(x_i,t_0;y,s)|\right)|\nabla_y v_\varepsilon| \eta dyds\\
\leq&C\varepsilon(\frac{2r_2}{r_1})^{m-1} ||\nabla_y v_\varepsilon||_{L^{p_3}(\Omega')}||\nabla_z
\psi^\varepsilon||_{L^{p_4}({\Omega'})}\left(\int_{\tilde{\Omega}}\left({(t-s)^{-\frac{d}{2}p_2}} \exp \left\{-\frac{C|y|^{2}}{t-s}\right\}\right) dyds\right)^{1/p_2}\\
\leq& C\varepsilon(\frac{2r_2}{r_1})^{m-1} ||\nabla v_\varepsilon||_{L^{p_3}(\Omega'_{r_3,t_0})}||\nabla_z
\psi^\varepsilon||_{L^{p_4}(\tilde{\Omega}_{{r_3},t_0})}\left(\int_{t_0-r_3^2}^{t_0}{(t-s)^{-\frac{d}{2}p_2+\frac{d}{2}}} ds\right)^{1/p_2}\\
\leq& C \varepsilon r_3 (\frac{2r_2}{r_1})^{m-1}\sup_{\tilde{\Omega}_{r_3,t_0}}|v_\varepsilon|,
\end{aligned}\end{equation}
with $\Omega'=:E_{4r_3/5}\times (t_0-3r_3^2/4,t_0)$ and $\tilde{\Omega}=\tilde{\Omega}_{r_3,t_0}=:E_{r_3}\times (t_0-r_3^2,t_0)$, $p_2\in (1,+\frac{2}{d})$ close to $1+\frac{2}{d}$ and $p_3$ sufficiently large, close to $\infty$ with $\frac{1}{p_2}+\frac{1}{p_3}+\frac{1}{p_4}=1$. Note that $p_4>\frac{d+2}{2}$ close to $\frac{d+2}{2}$. And we have used, in the estimate $(4.9)$, the size estimate $|\Gamma_\varepsilon(x,t;y,s)|\leq C(t-s)^{-\frac{d}{2}}\exp\left\{-\frac{\kappa|x-y|^2}{t-s}\right\}$, $(4.8)$ as well as the the following estimates for $\nabla v_\varepsilon$,
\begin{equation}\begin{aligned}
\left(\fint_{L^{p_3}(\Omega'_{r_3,t_0})}|\nabla v_\varepsilon|^{p_3}\right)^{1/p_3} &\leq C r_3^{-1}\sup_{\tilde{\Omega}_{r_3,t_0}}|v_\varepsilon|+Cr_3\left(\fint_{\tilde{\Omega}_{r_3,t_0}}|(\mathcal{M}(V)-V(x/\varepsilon))
v_\varepsilon|^{\frac{d+2}{2}}\right)^{\frac{2}{d+2}}\\
& \leq C r_3^{-1}\sup_{\tilde{\Omega}_{r_3,t_0}}|v_\varepsilon|,
\end{aligned}\end{equation}
which may be proved by the estimates $|\nabla_y \Gamma_\varepsilon(x,t;y,s)|\leq C(t-s)^{-\frac{d+1}{2}}\exp\left\{-\frac{\kappa|x-y|^2}{t-s}\right\}$
and
$$\fint_{\tilde{\Omega}_{r_3,t_0}}|(\mathcal{M}(V)-V(x/\varepsilon))
|^{\frac{d+2}{2}}\leq C+\fint_{\tilde{\Omega}_{r_3,t_0}}|V(x/\varepsilon)
|^{\frac{d+2}{2}}\leq C+\fint_Z|V(z)
|^{\frac{d+2}{2}},$$
if $r_3\geq \varepsilon$.
Thus, combining $(4.7)$ and $(4.9)$ yields that
\begin{equation}
|I_3|\leq C \varepsilon r_3 (\frac{2r_2}{r_1})^{m-1}\sup_{\tilde{\Omega}_{r_3,t_0}}|v_\varepsilon|.
\end{equation}
Consequently, in view of $(3.37)$, we actually have
\begin{equation}\begin{aligned}
\sup_{E_{r_2}}|v_\varepsilon(\cdot,t_0)|\leq& C\left\{ \frac{(2r_2)^{m-1}}{{r_1}^{m-1}}\sup_{E_{r_1}}|v_\varepsilon(\cdot,t_0)|+\frac{(4r_2)^m}{r_3^{m}}\sup_{\tilde{\Omega}_{r_3,t_0}}|v_\varepsilon|\right.\\
&\quad\quad\left.+\frac{(2r_2)^m}{r_1^m}\varepsilon r_3^{-1}\log (2+\varepsilon^{-1}r_3)\sup_{\tilde{\Omega}_{r_3,t_0}}|v_\varepsilon|\right\}.
\end{aligned}\end{equation}
Then, totally similar to the discussion of Theorem 1.1, there holds
\begin{equation}
\sup_{E_{r_2}}|v_\varepsilon(\cdot,t_0)|\leq C\left\{ (\sup_{E_{r_1}}|v_\varepsilon(\cdot,t_0)|)^{\alpha}(\sup_{\tilde{\Omega}_{{r_3},t_0}}|v_\varepsilon|)^{1-\alpha}+\frac{r_3}{r_{1}}\left[\frac{\varepsilon}{{r_3}}\log (2+\frac{r_3}{{\varepsilon}})\right]^{\alpha}\sup_{\tilde{\Omega}_{{r_3},t_0}}|v_\varepsilon|\right\},
\end{equation}this together with $v_\varepsilon=u_\varepsilon e^{\mathcal{M}(V)t}$ and $-T<t<T$ gives the desired estimate $(1.11)$, thus completes this proof.
\begin{center}{\textbf{Acknowledgements}}
\end{center}

The author thanks Prof. Luis Escauriaza for helpful discussions.
\normalem\bibliographystyle{plain}{}
\bibliography{propagation}
\end{document}